	\def\corref#1{<#1>}
\numberwithin{equation}{section}
\DeclareMathOperator*{\argmin}{arg\,min}
\def\A{{\mathcal A}}
\def\K{{\mathcal K}}
\def\L{{\mathcal L}}
\def\S{{\mathcal S}}
\def\T{{\mathcal T}}
\newtheorem{lemma}{Lemma}[section]
\newtheorem{definition}{Definition}[section]
\newtheorem{theorem}{Theorem}[section]
\begin{document}
\begin{frontmatter}

\title{An Efficient Method for Joint Delay-Doppler Estimation of Moving Targets in Passive Radar} 

\author[mymainaddress]{Mengjiao Shi}
\author[mymainaddress,mysecondaryaddress]{Yunhai Xiao\corref{mycorrespondingauthor}}
\cortext[mycorrespondingauthor]{Corresponding author}\ead{yhxiao@henu.edu.cn}
\author[mysecondaryaddress]{Peili Li}

\address[mymainaddress]{School of Mathematics and Statistics, Henan University, Kaifeng 475000, China}
\address[mysecondaryaddress]{Center for Applied Mathematics of Henan Province, Henan University, Zhengzhou 450046, China}

\begin{abstract}
 {Passive radar systems can detect and track the moving targets of interest by exploiting  non-cooperative illuminators-of-opportunity to transmit orthogonal frequency division multiplexing (OFDM) signals. 
These targets are searched  using a bank of correlators  tuned to the waveform corresponding to the given Doppler frequency shift and delay.
 In this paper, we study the problem of joint delay-Doppler estimation of moving targets in OFDM passive radar. 
 This task of estimation is described as an atomic-norm regularized convex optimization problem, or equivalently, a semi-definite programming problem.
The alternating direction method of multipliers (ADMM) can be employed which computes each variable in a Gauss-Seidel manner, but its convergence is lack of certificate.
In this paper, we use a symmetric Gauss-Seidel (sGS) to the framework of ADMM, which only needs to compute some of the subproblems twice but has the ability to ensure convergence.
We do some simulated experiments  which illustrate that the sGS-ADMM is superior to ADMM in terms of accuracy and computing time.}
\end{abstract}

\begin{keyword}
Passive radar\sep  orthogonal frequency division multiplexing\sep  atomic norm\sep  alternating direction method of multipliers\sep semi-definite programming
	\end{keyword}
\end{frontmatter}

\section{Introduction}
Passive radar equipped with orthogonal frequency division multiplexing (OFDM) signals has been widely used in  civilian and military targets' detection. Passive radar systems can detect and track a target of interest by
illuminating it with  illuminators of opportunity (IOs).  
Compared to active radar,  passive radar systems enjoy much superiority  due to the non-cooperative nature of IOs: (i) Transmitters are unnecessary so that the system is substantially smaller and less expensive; (ii) Operations through ambient communication signals which don't cause interference to the existing wireless communication signals, e.g., radio and television broadcasting signals \cite{howland2005fm, baczyk2011reconstruction,zhang2015joint}. Nevertheless, the implementations of passive radar might encounter some challenges, for example, the transmitted signal is generally unknown to the receiver because the IOs are not under control. In addition, the received direct-path signal is usually stronger than the target reflections, which makes it  difficult to detect and track the targets.
	 
To manage these challenges, many passive radar systems are equipped with an additional separate reference channel to collect the transmitted signal as a reference, so that, it can eliminate the unwanted echoes in surveillance channels, e.g. direct signals, clutter, and multi-path signals \cite{tao2010direct, colone2006cancellation, cardinali2007comparison}. 
At present, many wireless internet and mobile communications, like digital audio/video broadcasts, are all using   OFDM signals. 
The OFDM is a state-of-art scheme which can reduce  inter-symbol interference and can increase the bandwidth efficiency by using orthogonal characteristics between subcarriers.
In general, OFDM signals can also be generated efficiently by fast Fourier transform (FFT) \cite{ketpan2015target}. 
We note that the targets' detection methods with OFDM signals have been studied over the past few years, for instance, Sen \& Nehorai \cite{sen2010adaptive} proposed a method for detecting a moving target in the presence of multi-path reflection in adaptive OFDM radar. Palmer et al. \cite{palmer2012dvb} and Falcone et al. \cite{falcone2010experimental} estimated the delays and Doppler shifts of targets when the demodulation is perfect. 
Berger et al. \cite{berger2010signal} employed multiple signal classifiers and compressed sensing techniques to obtain better target resolutions and better clutter removal performance. Taking into account the demodulation error, Zheng \& Wang 
\cite{zheng2017super} proposed a delay and Doppler shift estimation method which uses the received OFDM signal from an un-coordinated but synchronized illuminator.

In OFDM passive radar system, it was known that demodulation can be implemented by using a reference signal \cite{colone2009multistage}. Since the demodulation can achieve better accuracy than the direct use of a reference signal, a more accurate matched filter can be implemented on the basis of the data symbols estimated in advance. Thereby, the performance of passive radar is greatly improved.
The exact matched filter formulation for passive radar using OFDM waveforms was derived by Berger et al. \cite{berger2010signal},
in which the signal-to-noise ratio (SNR) is improved by reducing the noise’s spectral bandwidth to that of the wavelet.
Later, Zheng et al. \cite{zheng2017super} developed an explicit model for an accurately matched filter.
Suppose there are $K$ paths in the surveillance area. The delay and Doppler frequency are denoted as $\tau_k$ and $f_k$, respectively. Then, the relationship between the received signal $y(t)$ and the transmitted signal $x(t)$ takes the following form
\begin{equation}\label{eq:y}
	y(t) = \sum_{k=1}^K A_k e^{i2\pi f_k t} x(t-\tau_k) + w(t),
\end{equation}
where $w(t)$ is an additive noise, and $A_k$ is a complex coefficient to characterize the attenuations such as path loss,
reflection, and processing gains. 
When $\tau_k$ and $f_k$ are achieved, the range and velocity of the targets can be determined subsequently. 
In order to obtain   useful information, it should process the received signal for time-frequency conversion by utilizing FFT.
Mathematically, to explore the signal's structure, the received signal taken by FFT can be conveniently denoted by matrices, and then be reordered as a vector. More precisely, if the received signal is denoted as $\bm r$, then, \eqref{eq:y}  can be reshaped as the following vector form
\begin{equation}\label{eq:rbm}
	\bm{r} = \bm{\tilde S}\bm{z} + \bm{e} + \bm{v} 
	= \bm{\tilde S} \left(\bm G(\bm \psi)^C \circ \bm B(\bm \phi) \right) \bm{\alpha} + \bm{e} + \bm{v},
\end{equation}
where $\bm{\tilde S}$ is a diagonal matrix related to data symbols; $\bm \psi$, $\bm \phi$, and $\bm \alpha$ are unknown vectors associated with the delay, Doppler frequency, and attenuations, respectively; $\bm e$ is a demodulation error vector and $\bm v$ is an error. 
Besides, the symbol ``$\circ$'' denotes Khatri-Rao product, and ``$(\cdot)^C$'' represents conjugate. 
Using these notations, we see that estimating $\tau_k$ and $f_k$ turns to 
jointly estimate the parameters $\bm \phi$,  $\bm \psi$, and $\bm \alpha$.
Noting that the 2nd-equality  \eqref{eq:rbm} is nonlinear, which may take great effort to estimate $\bm \phi$,  $\bm \psi$, and $\bm \alpha$ together.
Meanwhile, the sparsity of signals should be taken into account because  the targets and clutters are indeed sparsely distributed in space, so do the reflected signals. Besides, the demodulation error rate of a communication system is typically low under  normal operating conditions, so the demodulation error signal should be sparse, too.

We note that the spectrum-based parameter estimation methods can be roughly divided into two categories, namely, the  beamforming techniques and the subspace-based approaches. 
One of the most typical subspace-based methods is named multiple signal classification (MUSIC) which has higher superresolution, see Kim et al. \cite{krim1996two}. 
In recent years, the employment of compressive sensing (CS) methodologies into the problem of signals' estimation has been attracted tremendous attention. 
For instance, Berger et al. \cite{berger2010signal} implemented  MUSIC as a two-dimensional spectral estimator by using  spatial smoothing, and then used CS to identify the targets. But this approach needs an additional step to remove the dominant clutter and direct signals. 
Subsequently, Zheng  et al. \cite{zheng2017super} applied the CS technique to simplify the joint estimation problem (\ref{eq:rbm}) as a convex optimization problem.
It was shown in \cite{candes2013super, candes2014towards} that, the frequencies can be exactly recovered via convex optimization once the separation between the frequencies is larger than a certain threshold. 
Later, this result was extended to a continuous-frequency-recovery problem based on an atomic-norm regularized minimization \cite{tang2013compressed, bhaskar2013atomic}. Moreover, Bhaskar et al. \cite{bhaskar2013atomic} showed that the approach based on  a semi-definite program (SDP) is superior to that based on an $\ell_1$-norm although  the latter  is generally better than some classical line spectral analysis approaches.

As we know, the CS methodologies concentrate on  discrete dictionary coefficients, but the signals from the moving targets are specified by some parameters in a continuous domain. 
In order to apply CS into a continuous scenario, it  usually adopts a discretization procedure to reduce the continuous parameter space to a finite set of grid points, see e.g., \cite{herman2009high,bajwa2010compressed,hu2012compressed}. 
These simple strategies might yield good performance for problems where the true parameters lie on the grids, but there is also a drawback of mismatch \cite{yang2012robustly,yang2012off}. 
To resolve this dilemma, Tang  et al. \cite{yang2016exact} employed an atomic-norm to work directly on the continuous frequency  instead of on a grid, that is,
exactly identifying the unknown frequencies on any values in a normalized frequency domain.
Utilizing the atomic-norm induced by samples of complex exponentials, it is appropriate to estimate $\bm z$ and $\bm e$ through the following atomic-norm regularized minimization
\begin{eqnarray}
	\label{eq:mod0}
	(\bm{\bar z},\bm{\bar e}) = \mathop {\arg \min }\limits_{\bm{z}\in \mathbb{C}^{MN},\bm{e}\in \mathbb{C}^{MN}} \frac{1}{2}{\left\| {{\bm{r}} - \bm{e} - \bm{\tilde S}\bm{z}} \right\|_2^{2}} + \lambda \| \bm{z} \|_{\cal A} + \mu \| \bm{e} \|_1, 
\end{eqnarray}
where $\lambda > 0$ and $\mu>0$ are weighting parameters, $\| \cdot\|_{\cal A}$ is a so-called atomic-norm induced by an atom set $\cal A$, $\bm r\in \mathbb{C}^{MN}$ is a received signal, and $\bm{\tilde S}$ is a $MN\times MN$ matrix related to data symbols. 
When $\bm{z}$ is achieved, the delay $\tau_k$ and Doppler frequency  $f_k$ appeared in \eqref{eq:y} can be obtained subsequently. 
From Yang et al. \cite{yang2016vandermonde}, it is shown that the model \eqref{eq:mod0} can be reformulated as the following SDP problem
	\begin{eqnarray}\label{eq:mod1}
	\begin{aligned}
		&\min\limits_{\bm{e},\bm{z},\epsilon,	\bm{U} 	}\quad \frac{1}{2}\|\bm{r}-\bm{e}-\tilde{S}\bm{z}\|^2_2 + \frac{\lambda}{2MN}{\rm Tr}\big(\mathcal T(\bm{U})\big) + \frac{\lambda \epsilon}{2} + \mu\| \bm{e}\|_{1}
		\\
		&\qquad \text{s.t.} \qquad \begin{pmatrix}
			\mathcal T(\bm{U}) & \bm{z} \\
			\bm{z}^H  & \epsilon
		\end{pmatrix} \succeq 0,
	\end{aligned}
\end{eqnarray}
where $\epsilon>0$ is an unknown variable, $\mathcal T(\bm{U})$ is a $MN\times MN$ block Toeplitz matrix related to $\bm U\in \mathbb{C}^{(2M-1)\times(2N-1)}$,  ``$(\cdot)^H$'' represents conjugate transpose, and the symbol `$\succeq$' indicates that the matrix is positive and semi-definite.

In light of the above analysis, we see that the estimation can be implemented by developing an efficient algorithm to find an optimal solution of \eqref{eq:mod1}. Zheng et al. \cite{zheng2017super} directly employed an alternating direction method of multipliers (ADMM)  on \eqref{eq:mod1} which has been displayed better performance according to some simulation results. We note that the variables in ADMM are computed individually in a Gauss-Seidel manner, and all the subproblems are solved efficiently.
But, the convergence of the ADMM to  \eqref{eq:mod1} is lack of certificate, which may lead to the algorithm failing occasionally. For a counter-example, one may refer to the  paper of Chen et al.  \cite{chen2016direct}. 
It is because of this, an algorithm which is at least as efficient as the directly-extended ADMM but with  a convergence certificate is very necessary. 
To achieve this goal, in this paper, we recast the variables $(\bm{e},\bm{z},\epsilon,	\bm{U})$ as two groups and then compute the subproblems at each group in a symmetric Gauss-Seidel (sGS) order. The sGS used in each group  needs to compute some subproblems twice but it can improve the performance evidently because it has the ability to ensure convergence. 
It should be emphasized that the sGS was firstly developed by Li et al. \cite{li2016schur}, and was successfully used to solve many multi-block conic programming problems (e.g.,  \cite{chen2017efficient,li2018efficient, sun2015convergent}) and some image
processing problems (e.g., \cite{ding2018symmetric}). 
The great feather of the sGS based ADMM  is that 
it inherits several advantages of ADMM and is capable of solving some large multi-block composite problems.
Most importantly, the sGS decomposition theorem of Li et al. \cite{li2019block} established the equivalence between the sGS based ADMM and a traditional ADMM, so that its convergence can be followed directly from Fazel et al. \cite{fazel2013hankel}. 
We do  numerical experiments using some simulated data which illustrates that the sGS based ADMM always  detects the moving targets clearly and it is also superior to  ADMM.

The remainder of the paper is organized as follows. In section \ref{sec2}, we quickly describe  the passive radar signal system and formally review the joint delay-Doppler estimation model. Some basic concepts and preliminary knowledge of optimization are also included. Section \ref{sec3} is devoted to applying the sGS based ADMM to solve problem \eqref{eq:mod1}, subsequently, the approach of recovering the frequency and amplitude is also described. Section \ref{sec4} provides some simulated experiments to show the numerical superiority of sGS-ADMM.

\section{Preliminaries}\label{sec2}
In this section, we briefly review the exact matched filter formulation in passive radar using OFDM waveforms and display some details on   model \eqref{eq:rbm}. 
In addition, we also introduce some basic concepts and definitions in optimization used frequently in subsequent developments.

\subsection{Descriptions on passive radar system }\label{sub2.1}
At the beginning of this part, we briefly give a blanket of settings on passive radar systems, and then develop an accurately matched filtering formula. The settings include:
(i) the received OFDM signal comes from an un-coordinated but synchronized illuminator;
(ii) the passive radar system only performs  demodulation on data symbols, but not on forward error correction decoding.

The OFDM is a multicarrier modulation scheme which can be described as follows: suppose that there are $M$ blocks in transmitted data and there are $N$ orthogonal subcarriers in each block.
Accordingly, the data symbols in $m$-th block and $n$-th subcarrier are denoted by $s_m(n)$. 
As the demodulation type is usually known, then the data-symbol $s_m(n)$ can be estimated and  the matched filter can be implemented. 
Given a rectangular window of length $T$ at the receiver, the frequency spacing is denoted as $\bigtriangleup f := 1/T$, and the duration of each transmission block is denoted as $\bar T := T+T_\text{cp}$, where $T_\text{cp}$ is the length of a cyclic prefix. 
Using these notations, the baseband   signal over the $M$ blocks can be expressed as
\begin{align*}
	\label{eq:x}
	x(t):=\sum_{m=0}^{M-1}x_m(t)
	:=\sum_{m=0}^{M-1} \sum_{n=0}^{N-1} s_m(n) e^{i 2\pi n \Delta f t } \xi(t - m \bar T),
\end{align*}
where $x_m(t)$ is the baseband signal  in the $m$-th block with $ m \bar T - T_\text{cp} \leq t \leq m \bar T + T$,
 and $\xi(t) =1$ if $t \in [-T_\text{cp},T]$ and $0$ otherwise.
Indexing the return of the $k$-th arrival and its associated Doppler shift and delay, the received signal  and the  received signal $y(t)$ takes the following form:
\begin{equation}
	\label{eq:y2}
	y(t) := \sum_{k=1}^K A_k e^{i2\pi f_k t} x(t-\tau_k) + w(t),
\end{equation}
where $K$ is the number of paths, $w(t)$ is an additive noise, $A_k$ is an attenuations coefficient, $\tau_k$ is a delay, and $f_k$ is a  Doppler frequency. 

If taking Fourier transform on $y(t)$ in $m$-th block, the resulting signal in $n$-th subcarrier is formulated as:
\begin{equation}\label{eq:rm1}
	\begin{split}
		r_m(n) :=& \int_{m \bar T}^{m \bar T + T} e^{-i 2\pi n \Delta f t} y(t)dt\\
		=& \sum_{k=1}^{K}\sum_{m'=0}^{M-1}\sum_{l=0}^{N-1}A_k s_{m'}(l)\int_{m \bar T}^{m \bar T + T} e^{i 2\pi (f_k t+(l- n) \Delta f t-l \Delta f \tau_k)} \xi(t -\tau_k- m' \bar T) dt \\
		&+ \int_{m \bar T}^{m \bar T + T} e^{-i 2\pi n \Delta f t} w(t)dt.
	\end{split}
\end{equation}
Assuming that the largest possible delay is smaller than the cyclic prefix, then the term $\xi(t-\tau_k-m'\bar T)$ is $1$ in the integration interval  $[m\bar{T}, m\bar{T}+T]$ and $0$ otherwise. Usually, the integration time is almost on the order of a second, which means that when the product between $\bar{T}$ and the Doppler frequency is small compared to unity, we can approximate the phase rotation within one OFDM block as constant \cite{berger2010signal}, that is,
$$
e^{i2\pi f_k t} \approx e^{i2\pi f_k m \bar T},\quad \forall t\in [m\bar{T}, m\bar{T}+T].
$$

For notational simplicity, we denote $\alpha_k := A_k T $, $\phi_k := f_k \bar T \in [0,1)$ and $\psi_k := \Delta f \tau_k \in [0,1)$.
Besides, we note that  $\int_{m \bar T}^{m \bar T + T} e^{i 2\pi (l-n) \Delta f t}dt=0$ if $l\neq n$.  Then $r_m(n)$ can further be rewritten as
\begin{equation}
	\begin{split}
		\label{eq:rm}
		r_m(n)=& s_m(n) \sum_{k=1}^{K} A_k e^{i 2\pi m f_k \bar T} \int_{m \bar T}^{m \bar T + T} e^{- i 2\pi n \Delta f  \tau_k } dt + \int_{m \bar T}^{m \bar T + T} e^{-i 2\pi n \Delta f t} w(t)dt\\
		=& s_m(n) \sum_{k=1}^{K} A_k \bar{T}e^{i 2\pi m f_k \bar T}e^{- i 2\pi n \Delta f  \tau_k } dt + \int_{m \bar T}^{m \bar T + T} e^{-i 2\pi n \Delta f t} w(t)dt\\
		=&s_m(n)  \sum_{k=1}^{K} \alpha_k e^{i(2\pi m \phi_k - 2\pi n \psi_k)}+ \int_{m \bar T}^{m \bar T + T} e^{-i 2\pi n \Delta f t} w(t)dt\\
		=& s_m(n)z_m(n)   + v_m(n),
	\end{split}
\end{equation}
 where
 \begin{equation}\label{eq:z}
 	z_m(n) := \sum_{k=1}^{K} \alpha_k e^{i(2\pi m \phi_k - 2\pi n \psi_k)}\quad\text{and}\quad
    v_m (n):= \int_{m \bar T}^{m \bar T + T} e^{-i 2\pi n \Delta f t} w(t) dt,
\end{equation}
in which, $v_m (n)$ is assumed to be a complex Gaussian variable with zero mean and variance $\sigma^2$. 
For convenience, we denote the estimated data-symbol  in $m$-th block as $\hat{s}_m(n)$. Then the demodulation error, denoted by $e_m(n)$, takes the following form
\begin{equation}\label{eq:e}
		e_m(n) := r_m(n) - \hat s_m(n) z_m(n) - v_m(n).
\end{equation}
It should be noted that the demodulation error rate of a communication system is typically low under some normal operating conditions. Hence, most of  $e_m(n)$ are zero and the noise caused by  demodulation error is always sparse.

Let $\bm{\hat S} \in \mathbb{C}^{M \times N}$, $\bm R \in \mathbb{C}^{M \times N}$, $\bm{Z} \in \mathbb{C}^{M \times N}$, $\bm E \in \mathbb{C}^{M \times N}$ and $\bm V \in \mathbb{C}^{M \times N}$ be matrices whose $(m,n)$-th element are denoted as $\hat s_m(n)$, $r_m(n)$, $z_m(n)$, $e_m(n)$ and $v_m(n)$, respectively. Let $\bm \phi := (\phi_1, \phi_2,...,\phi_K)^{\top}$ and $\bm \psi := (\psi_1, \psi_2,...,\psi_K)^{\top}$.  Accordingly, the response matrices are defined as $\bm B(\bm \phi):= (\bm b(\phi_1), \bm b(\phi_2),..., \bm b(\phi_K))\in\mathbb{C}^{M\times K}$ and $\bm G (\bm \psi) :=( \bm g(\psi_1),\bm g(\psi_2),...,\bm g(\psi_K))\in\mathbb{C}^{N\times K}$, in which, $\bm b(\phi) := (1, e^{i 2 \pi \phi},\ldots,e^{i 2 \pi (M-1) \phi})^{\top}\in\mathbb{C}^M$ and $\bm g(\psi) := (1, e^{i 2 \pi \psi},\dots,e^{i 2 \pi (N-1) \psi})^{\top}\in\mathbb{C}^N$. 
Using these notations, the relation \eqref{eq:e} can be rewritten in a matrix form, that is,
\begin{equation}\label{eq:R}
		\bm R = \bm{\hat S} \odot \bm{Z} + \bm E + \bm V 
		= \bm{\hat S} \odot \left( {\bm B}(\bm \phi) \text{diag}(\bm{\alpha}) \bm{G}(\bm \psi)^C \right) + \bm E + \bm V,
\end{equation}
where $\bm{\alpha}:= (\alpha_1,\alpha_2,..., \alpha_K)^{\top}\in\mathbb{C}^{K}$ and ``$\odot$" is a Hadamard product.  Utilizing the structure of the signal, the matrices in \eqref{eq:R}  can be vectorized  to obtain a more concise and intuitive representation, that is,
\begin{equation*}
	\bm{r} = \bm{\tilde S} \left(\bm G(\bm \psi)^C \circ \bm B(\bm \phi) \right) \bm{\alpha} + \bm{e} + \bm{v},
\end{equation*}
where $\bm{\tilde S} = \text{diag}(\text{vec}(\bm{\hat S})) \in \mathbb{C}^{MN \times MN}$, $\bm{r} = \text{vec}(\bm{R}) \in \mathbb{C}^{MN}$, $\bm{z} = \text{vec}(\bm{Z}) \in \mathbb{C}^{MN}$, $\bm{e} = \text{vec}(\bm E) \in \mathbb{C}^{MN}$ and $\bm{v} = \text{vec}(\bm V) \in \mathbb{C}^{MN}$. In this formula, the term $\left(\bm G(\bm \psi)^C \circ \bm B(\bm \phi) \right) \in \mathbb{C}^{MN \times K}$ is a matrix whose $k$-th column has the form $\bm g(\psi_k)^C \otimes \bm b(\phi_k)$ in which $\otimes$ denotes Kronecker product.

\subsection{Joint delay-Doppler estimation model}\label{sub2.2}

From the work of Chandrasekaran et al. \cite{chandrasekaran2012convex} that, the atomic-norm  is used to  characterize  a sparse combination of sinusoids.
Hence, define an atom as $\bm a(\phi, \psi) = \bm g(\psi)\otimes \bm b(\phi)$ where `$\otimes$' denotes Kronecker product. Obviously, we have that $\bm{z} := \sum_{k=1}^K \alpha_k \bm a(\phi_k,\psi_k)$. The atomic-norm can enforce sparsity in the atom set in the form of ${\cal A} := \{ \left.\bm a(\phi,\psi) \right| \phi \in [0,1), \psi \in [0,1) \}$ when it has been normalized.
The definition of atomic-norm is given as follows: 
\begin{definition}\label{def2.1}
 	The 2D atomic-norm for $\bm{z} \in \mathbb{C}^{MN}$ is defined as
	\begin{equation*}
		\| \bm{z} \|_{\cal A} := \inf \left\{\left.\epsilon>0 ~\right| {\bm z}\in \epsilon\,\rm{conv}\mathcal A\right\}
		= \mathop {\inf }\limits_{\scriptstyle{\phi_k} \in [0,1), \scriptstyle{\psi_k} \in [0,1), 
			\scriptstyle{\alpha_k} \in \mathbb{C}\hfill} \left\{ \left. \sum\limits_{k=1}^K {|\alpha_k| } ~\right| \bm{z} = \sum_{k=1}^K  {\alpha_k\bm a(\phi_k,\psi_k)} \right\}.
	\end{equation*}
\end{definition}
\noindent   
From the work of Yang et al. \cite{yang2016vandermonde}, it  shows that the atomic-norm is closely related to a SDP problem containing a 
Toeplitz matrix,  that is,
\begin{eqnarray}
	\label{eq:atomic}
	\| \bm{z} \|_{\cal A} = \mathop {\inf }\limits_{\bm U, \epsilon} \left\{ \left. 
		\frac{1}{2MN}{\rm Tr}({\cal T}(\bm U)) + \frac{\epsilon }{2} ~\right|
		\text{s.t.} ~ {\begin{pmatrix}
				{{\cal T}(\bm U)} & \bm{z} \\
				{{\bm{z}^H}}      & \epsilon
	 \end{pmatrix}}  \succeq 0
\right\},
\end{eqnarray}
where ${\rm Tr}(\cdot)$ is the trace of a matrix, and $\bm U$ is a $(2M-1) \times (2N-1)$ matrix in the form of
\begin{eqnarray}
	\label{eq:U}
	\bm U = (\bm u_{-N+1},\bm u_{-N+2},\ldots,\bm u_0,\ldots,\bm u_{N-1}),
\end{eqnarray}
where $\bm u_l$ is a column vector with the form of $\bm u_l:=(u_l(-M+1),u_l(-M+2),\ldots,u_l(M-1))^{\top}$, and $\mathcal T(\bm{U})$ is a $MN\times MN$ block Toeplitz matrix in the form of
\begin{eqnarray}
	\label{eq:calT}
	{\cal T}(\bm U) =  \begin{pmatrix}
			{\text{Toep}(\bm u_0)}      &{\text{Toep}(\bm u_{-1})}  &{\cdots} &{\text{Toep}(\bm u_{-N+1})}\\
			{\text{Toep}(\bm u_1)}      &{\text{Toep}(\bm u_0)}     &{\cdots} &{\text{Toep}(\bm u_{-N+2})}\\
			{\vdots}                    &{\vdots}                   &{\ddots} &{\vdots}                  \\
			{\text{Toep}(\bm u_{N-1})}  &{\text{Toep}(\bm u_{N-2})} &{\cdots} &{\text{Toep}(\bm u_0)}
	\end{pmatrix},
\end{eqnarray}
with
\begin{eqnarray}
	\label{eq:T}
	\text{Toep}(\bm u_l) = {\begin{pmatrix}
			{u_{l}(0)}  & {{u_{l}(-1)}}   & {\cdots}   & {{u_{l}(-M+1)}}  \\
			{u_{l}(1)}  &  {u_{l}(0)}     & {\cdots}   & {u_{l}(-M+2)}    \\
			{\vdots}    &   {\vdots}      & {\ddots}   &   {\vdots}       \\
			{u_{l}(M-1)}& {u_{l}(M-2)}    & {\cdots}   & {u_{l}(0)}
	\end{pmatrix}}.
\end{eqnarray}

At the end of this section, we quickly review the definition of proximal point operator defined in real space \cite{moreau1962fonctions} which is used frequently at the subsequent analysis.

\begin{definition}\label{def2.2}
	Let $f(x) :\mathbb{R}^n\to\mathbb{R}\cup\left\{+\infty\right\}$ be a closed proper convex function. The proximal point mapping of $f$ at $y$, denoted by $\text{Prox}_f^{\mu}:\mathbb{R}^n\to\mathbb{R}^n$, takes the following from
	\begin{displaymath}
		\text{Prox}_f^{\mu}(y)=\argmin_{x\in\mathbb{R}^n}\left\{f(x)+\frac{1}{2\mu}\|x-y\|_2^2\right\},
	\end{displaymath}
where $\mu>0$ is a given scalar. The symbol $\text{Prox}_f^{\mu}(y)$ is simply written as $\text{Prox}_f(y)$ in the special case of $\mu = 1$.
\end{definition}

It is known that $\text{Prox}_f^{\mu}(y)$ is properly defined for every $y\in \mathbb{R}^n$ if 
$f$ is  convex and proper. Specially, if $f(x)$ is an $\ell_1$-norm function,  it can be expressed explicitly as $\text{Prox}_f^{\mu}(y)=\text{sign}(y)\odot \max\left\{|y|-\mu , 0\right\} $. 
If $f(x)$ is an indicator function over a closed set $\mathcal{K}$, denoted by $\delta_{\mathcal{K}}(x)$, i.e., $0$ if $x\in \mathcal{K}$ and $\infty$ otherwise, then the proximal point mapping $\text{Prox}_{\delta_{\mathcal{K}}(\cdot)}^{\mu}(y)$ is equivalent to an orthogonal  projection over $\K$, i.e., $\Pi_{\mathcal{K}}(y)$.

\section{Estimation model and optimization algorithms}\label{sec3}
In the remaining part of this paper, our discussion is based on the complex Hermitian space with inner product $\langle \cdot,\cdot\rangle_{\mathbb{R}}$, i.e., $\langle \bm x,\bm y\rangle_{\mathbb{R}}=\text{Re}(\bm x^H\bm y)$ for arbitrary  vectors $\bm x\in \mathbb{C}^n$ and $\bm y\in \mathbb{C}^n$.

\subsection{Estimation model and directly-extended ADMM}
For convenience, we let $\bm\Theta := \begin{pmatrix}
	\mathcal T(\bm{U}) & \bm{z} \\
	\bm{z}^H  & \epsilon
\end{pmatrix}\in \mathbb{C}^{(MN+1)\times(MN+1)}$, then  model \eqref{eq:mod1} is reformulated as
	\begin{eqnarray}\label{eq:mod2}
	\begin{aligned}
		&\min\limits_{\bm{e},\bm{z},\epsilon,\bm{\Theta}, 	\bm{U} 	}\quad \frac{1}{2}\|\bm{r}-\bm{e}-\tilde{S}\bm{z}\|^2_2 + \frac{\lambda}{2MN}{\rm Tr}\big(\mathcal T(\bm{U})\big) + \frac{\lambda \epsilon}{2} + \mu\| \bm{e}\|_{1} +\delta_{\mathcal{S}^{MN+1}_+}(\bm \Theta)
		\\[2mm]
		&\qquad \text{s.t.} \qquad \bm\Theta =\begin{pmatrix}
			\mathcal T(\bm{U}) & \bm{z} \\
			\bm{z}^H  & \epsilon
		\end{pmatrix},
	\end{aligned}
\end{eqnarray}
where  ${\bm z}\in \mathbb{C}^{MN}$, ${\bm e}\in \mathbb{C}^{MN}$, $\bm{U}\in \mathbb{C}^{(2M-1)\times (2N-1)}$, $\epsilon\in \mathbb{C}$, the notation $\S_+^{MN+1}$ represents a symmetric and semi-positive matrix space with dimension $MN+1$.
From the work of Tang et al. \cite{tang2013compressed} that, the dual  of \eqref{eq:mod2} takes the following form
\begin{eqnarray}\label{eq:dual}
	\max_{\bm \nu\in \mathbb{C}^{MN}} &{\left\langle {{(\bm{\tilde S}^H)^{-1} \bm \nu},\bm{r}} \right\rangle _{\mathbb R}} - \frac{1}{2}\left\| {(\bm{\tilde S}^H)^{-1} \bm \nu} \right\|_2^2 \\[2mm]
	\text{s.t.} & \|\bm \nu \|_{\cal A}^H \leq \lambda, \quad \left\| (\bm{\tilde S}^H)^{-1} \bm \nu \right\|_\infty \leq \mu\nonumber,
\end{eqnarray}
where $\bm \nu \in \mathbb{C}^{MN}$ is a dual variable, and $\| \bm \nu \|_{\cal A}^H = \sup_{\| \bm z \|_{\cal A} \leq 1} \langle \bm \nu , \bm z \rangle_{\mathbb R}$ is called dual norm of $\|\cdot\|_{\A}$. 

Let $\widetilde{\mathcal{L}}_{\rho}({\bm{e}},{\bm{z}},\epsilon,\bm U,\bm{\Theta}; \bm{\Gamma})$ be an augmented Lagrangian function associated with \eqref{eq:mod2},
where $\bm \Gamma\in\mathbb{C}^{(MN+1)\times(MN+1)}$ is a multiplier.
The directly-extended ADMM employed by Zheng  et al.  \cite{zheng2017super}  minimizes $\widetilde{\mathcal{L}}_{\rho}({\bm{z}}, \epsilon, \bm U, \bm{e}, \bm{\Theta}; \bm{\Gamma})$ in a simple Gauss-Seidel manner with respect to each variable by ignoring the relationship between them. More explicitly, with given $(\bm{z}^k, \epsilon^{k}, \bm{U}^{k}, \bm{e}^{k}, \bm{\Theta}^{k})$, the next point $({\bm{z}}^{k+1}, \epsilon^{k+1} ,\bm{U}^{k+1}, \bm{e}^{k+1}, \bm{\Theta}^{k+1})$ is generated in the order of $\bm{z}\to \epsilon\to\bm U\to\bm{e}\to\bm{\Theta}$.
As shown by Zheng et al. \cite{zheng2017super}  that, each step  involves solving a convex minimization problem which admits closed-form solutions by taking its favorable structures. 
Even though better performance has been achieved experimentally, the convergence of ADMM is still lack of certificate. For more details on this argument, one may refer to a counter-example of Chen et al. \cite{chen2016direct}. 
Because of this, it is ideal to design a convergent variant method which is at least as efficient as  ADMM.

\subsection{sGS based ADMM}
This part is devoted to designing a sGS based ADMM for solving \eqref{eq:mod2}. For convenience, we let $\bm{g} := {\bm{r}} - {\bm{e}} - \tilde{S}{\bm{z}}$, then \eqref{eq:mod2} is rewritten equivalently as the following from
	\begin{equation}\label{eq:mod3}
		\begin{split}
			&\argmin\limits_{\bm{e},\bm{g},\bm{z}, \epsilon,\bm{U},\bm{\Theta}} \quad \frac{1}{2}\|\bm{g}\|^{2}_{2} + \frac{\lambda}{2MN}\rm {Tr}(\mathcal T(\bm{U})) + \frac{\lambda \epsilon}{2} + \mu\|{\bm{e}}\|_{1} + \delta_{\mathcal{S}^{MN+1}_+}(\bm \Theta)
			\\
			&\qquad \text{s.t.} \qquad	\quad\bm{g} = \bm{r} - \bm{e} - \tilde{S}\bm{z},
			\\
			&\qquad \qquad \qquad \bm{\Theta} = 	\begin{pmatrix}
				\mathcal T(\bm{U}) & \bm{z} \\
				\bm{z}^H  & \epsilon
			\end{pmatrix}.
		\end{split}
	\end{equation}	
We note that although problem \eqref{eq:mod3} has separable structures with regarding to objective function and constraints, it actually contains six blocks, which indicates that the traditional two-block ADMM will no longer be used.
To address this issue, we  partition these variables into two groups, for example, we can view $(\bm e,\bm g)$ as one group
and $(\bm{z}, \epsilon,\bm{U},\bm{\Theta})$ as another. For clearly catching this partition, we let
\begin{eqnarray}
	\bm{Y}:=\begin{pmatrix}
	\begin{pmatrix}
		\bm{0}_{MN}	& \bm e\\
		\bm{0}^{\top}	    &   0
	\end{pmatrix}	
	\\
	\begin{pmatrix}
		\bm{0}_{MN}	& \bm{g} \\
		\bm{0}^{\top}       &    0
	\end{pmatrix}
\end{pmatrix}	
\triangleq  \begin{pmatrix}
	\bm{Y}_1	    \\
	\bm{Y}_2
\end{pmatrix} ,
\qquad	
\bm{W}:=\begin{pmatrix}
	\bm{\Theta}	\\
	\begin{pmatrix}
		\bm{0}_{MN}    	&   \bm z  \\
		{\bm z}^H	&    0
	\end{pmatrix} 
	\\
	\begin{pmatrix}
		\mathcal{T}(\bm{U})	&   \bm 0 \\
		\bm{0}^{\top}	                &   \epsilon
	\end{pmatrix} 
\end{pmatrix},
\end{eqnarray}
where `$\bm 0$' denotes a zero vector, and `$\bm 0_{MN}$' denotes a zero matrix with order $MN$.
Using both notations,   model \eqref{eq:mod3} is transformed into the following form with respect to two variables $\bm Y$ and $\bm W$, that is,
\begin{equation}\label{eq:blockmod}
	\begin{split}
		\argmin_{\bm{Y},\bm W} &\quad  
		\mu\|\bm{Y}_1\|_{1} + \frac{1}{2}\| \left( \bm{0}_{MN+1}~,~\bm{I}_{MN+1}  \right) \bm{Y}        
		\|^{2}_{F} +\delta_{\mathcal{S}^{MN+1}_+}(\bm \Theta)\\
		&\qquad\qquad\qquad\qquad\qquad\qquad\qquad+ \frac{\lambda}{2}   {\rm Tr}  \left( \begin{pmatrix}
			\bm{0}_{MN+1}, ~  \bm 0_{MN+1},~
			\begin{pmatrix}  \frac{1}{MN}\bm{I}_{MN}   & \bm{0}	\\ 
				\bm{0}^{\top}           & 1
			\end{pmatrix}
		\end{pmatrix}\bm W \right)
		\\[2mm]   
		\text{s.t.}  & \quad 
		\begin{pmatrix}
			\bm{0}_{MN+1} & \bm{0}_{MN+1}\\ 
			\bm{I}_{MN+1} & \bm{I}_{MN+1}
		\end{pmatrix} \bm{Y} 
		+   
		\begin{pmatrix}
			-\bm{I}_{MN+1}  & \bm{I}_{MN+1} & \bm{I}_{MN+1}\\
			\bm{0}_{MN+1}   &\begin{pmatrix}
				\widetilde{\bm{S}} & \bm{0}\\
				\bm{0}^{\top}       &      0
			\end{pmatrix}           &  \bm{0}_{MN+1}
		\end{pmatrix}   \bm W 
		=    
		\begin{pmatrix}
			\bm{0}_{MN+1}  \\  \begin{pmatrix}
				\bm{0}_{MN} & \bm{r}\\
				\bm{0}      &   0
			\end{pmatrix}
		\end{pmatrix},
	\end{split}
\end{equation}
where   $\bm I$ is an identity matrix with an appropriate size.
Obviously, we see that there are two nonsmooth terms in the objective function of \eqref{eq:blockmod}, one is about $\bm Y_1$ in group $\bm Y$ and the other one is about $\bm\Theta$ in   group $\bm W$, and each group possesses a ``nonsmooth+smooth" structure. 

Let $\rho>0$ be a penalty parameter and $\mathcal{L}_{\rho}({\bm{e}},\bm{g},{\bm{z}}, \epsilon,\bm U,\bm{\Theta} ;\bm{\beta} , \bm{\Gamma})$ be an augmented Lagrangian function associated with \eqref{eq:mod3}, that is,
\begin{eqnarray}\label{eq:lag}
	\mathcal{L}_{\rho}({\bm{e}},\bm{g},{\bm{z}}, \epsilon,\bm U,\bm{\Theta} ;\bm{\beta} , \bm{\Gamma}):=&
	\frac{1}{2}\|\bm{g}\|^{2}_{2} + \frac{\lambda}{2MN}{\rm Tr}(\mathcal T(\bm{U})) + \frac{\lambda \epsilon}{2} + \mu\|{\bm{e}}\|_{1} + \delta_{\mathcal{S}^{MN+1}_+}(\bm \Theta) \nonumber\\
	&+ \langle \bm{\beta},\bm{g} - {\bm{r}} + {\bm{e}} + \tilde{S}{\bm{z}} \rangle_{\mathbb{R}}
	+\left\langle \bm{\Gamma} , \bm{\Theta} -\begin{pmatrix}
		\mathcal T(\bm{U}) & {\bm{z}} \\
		{\bm{z}}^H         &  \epsilon
	\end{pmatrix}  \right\rangle_{\mathbb{R}}          
	\nonumber\\
	&+	\frac{\rho}{2}\| \bm{g} - {\bm{r}} + {\bm{e}} + \tilde{S}{\bm{z}} \|^2_2
	+ \frac{\rho}{2}\left\| \bm{\Theta} -	\begin{pmatrix}
		\mathcal T(\bm{U}) & {\bm{z}} \\
		{\bm{z}}^H         & \epsilon
	\end{pmatrix}    \right\|^2_F,
\end{eqnarray}
where $\bm{\beta}\in\mathbb{C}^{MN\times 1}$ and $\bm \Gamma\in\mathbb{C}^{(MN+1)\times(MN+1)}$ are multipliers. The sGS based ADMM    minimizes $\mathcal{L}_{\rho}({\bm{e}},\bm{g},{\bm{z}}, \epsilon,\bm U,\bm{\Theta} ;\bm{\beta} , \bm{\Gamma})$ alternatively with respect to two groups $(\bm e, \bm g)$ and $(\bm z,   \epsilon,\bm U,\bm\Theta)$, and then adopts sGS technique in each group. 
For example, in the $(\bm e, \bm g)$-group, it takes the order of  $\bm g\to \bm{e}\to \bm g$ in the sense of computing $\bm g$ twice, while in the  $(\bm z,   \epsilon,\bm U,\bm\Theta)$-group, it  takes the order of $\bm z\to \epsilon\to\bm U\to \bm\Theta \to \bm U\to \epsilon\to\bm z$, i.e., computing $(\bm z,   \epsilon,\bm U)$ twice. 
In light of these analysis, we get that, with given $(\bm e^k, \bm g^k)$, the next 
$(\bm e^{k+1}, \bm g^{k+1})$ is obtained using the following scheme 
\begin{equation}\label{geg}
\begin{cases}
	&\widetilde{\bm{g}}^k :=  \argmin\limits_{\bm{g}}	\mathcal{L}_{\rho}(\bm{e}^k,\bm{g},\bm{z}^k, \epsilon^k,\bm{U}^k,\bm{\Theta}^k;\bm{\beta}^k , \bm{\Gamma}^k),\\
	&\bm{e}^{k+1} := \argmin\limits_{\bm{e}}	\mathcal{L}_{\rho}(\bm{e}, \widetilde{\bm{g}}^k, \bm{z}^k, \epsilon^k, \bm{U}^k,\bm{\Theta}^k;\bm{\beta}^k , \bm{\Gamma}^k),\\
	&{\bm{g}}^{k+1}  := \argmin\limits_{\bm{g}}	\mathcal{L}_{\rho}(\bm{e}^{k+1},\bm{g},\bm{z}^k, \epsilon^k,\bm{U}^k,\bm{\Theta}^k;\bm{\beta}^k , \bm{\Gamma}^k).
\end{cases}
\end{equation}
Then, using   $(\bm e^{k+1}, \bm g^{k+1})$, the next point
$(\bm z^{k+1}, \epsilon^{k+1}, \bm U^{k+1},\bm\Theta^{k+1})$ is achieved via the following scheme 
\begin{equation}\label{sGS}
		\begin{cases}
		&\widetilde{\bm{z}}^k  := \argmin\limits_{\bm{z}}	\mathcal{L}_{\rho}(\bm{e}^{k+1},\bm{g}^{k+1},\bm{z}, \epsilon^k ,\bm{U}^k,\bm{\Theta}^k;\bm{\beta}^k , \bm{\Gamma}^k),		\\
		& {\tilde\epsilon}^k := \argmin\limits_\epsilon	\mathcal{L}_{\rho}({\bm{e}}^{k+1},\bm{g}^{k+1},\widetilde{\bm{z}}^k, \epsilon ,\bm{U}^k,\bm{\Theta}^k;\bm{\beta}^k , \bm{\Gamma}^k),		\\
		&\widetilde{\bm{U}}^k  := \argmin\limits_{\bm{U}}	\mathcal{L}_{\rho}({\bm{e}}^{k+1},\bm{g}^{k+1},\widetilde{\bm{z}}^k,\tilde\epsilon^k,\bm{U},\bm{\Theta}^k;\bm{\beta}^k , \bm{\Gamma}^k),		\\
		&{\bm{\Theta}}^{k+1}  := \argmin\limits_{\bm{\Theta}}	\mathcal{L}_{\rho}({\bm{e}}^{k+1},\bm{g}^{k+1},\widetilde{\bm{z}}^k ,\tilde\epsilon^k,\widetilde{\bm{U}}^k,\bm{\Theta};\bm{\beta}^k , \bm{\Gamma}^k),\\
		&\bm{U}^{k+1}  := \argmin\limits_{\bm{U}}	\mathcal{L}_{\rho}({\bm{e}}^{k+1},\bm{g}^{k+1},\widetilde{\bm{z}}^k,{\tilde\epsilon}^k,\bm{U},\bm{\Theta}^{k+1};\bm{\beta}^k , \bm{\Gamma}^k),		\\
		&\epsilon^{k+1}  := \argmin\limits_\epsilon	\mathcal{L}_{\rho}({\bm{e}}^{k+1},\bm{g}^{k+1},\widetilde{\bm{z}}^k, \epsilon ,\bm{U}^{k+1} ,\bm{\Theta}^{k+1};\bm{\beta}^k , \bm{\Gamma}^k),		\\
		&\bm{z}^{k+1}  := \argmin\limits_{\bm{z}}	\mathcal{L}_{\rho}({\bm{e}}^{k+1},\bm{g}^{k+1},\bm{z}, \epsilon^{k+1} ,\bm{U}^{k+1} ,\bm{\Theta}^{k+1};\bm{\beta}^k , \bm{\Gamma}^k).
		\end{cases}
\end{equation}
While the latest iterates $(\bm e^{k+1}, \bm g^{k+1},\bm z^{k+1}, \epsilon^{k+1}, \bm U^{k+1},\bm\Theta^{k+1})$ are obtained, the Lagrangian multipliers $(\bm\beta,\bm\Gamma)$ are updated according to the following schemes:
\begin{equation}\label{upmul}
\begin{cases}
	&\bm{\beta}^{k+1} := \bm{\beta}^{k} +\varrho \rho (\bm{g}^{k+1} - \bm{r} + \bm{e}^{k+1} + \tilde{S}\bm{z}^{k+1}),
	\\
	&\bm{\Gamma}^{k+1} := \bm{\Gamma}^{k} + \varrho \rho  \left(  \bm{\Theta}^{k+1} -\left( \begin{array}{cc}
		\mathcal T(\bm{U}^{k+1}) & \bm{z}^{k+1} \\
		(\bm{z}^{k+1})^H& \epsilon^{k+1} 
	\end{array}     \right)   \right),
\end{cases}
\end{equation}
where $\varrho\in(0,(1+\sqrt{5})/2)$ is a step length. 

Comparing the scheme \eqref{geg}-\eqref{sGS} to ADMM of Zheng et al. \cite{zheng2017super}, we see that the visible difference between them is that there are some additional subproblems.
One may naturally think that this approach must lead to more computational burdens, and thus make it  more inefficient.  Next, we will show that this iterative scheme converges globally and that the total iterations would be obviously reduced, so that, these extra computational costs are almost negligible.

\subsection{Subproblems' solving}

This part is devoted to solving the subproblems in \eqref{geg}-\eqref{sGS}. For convenience, we partition the variables $\bm\Theta\in\mathbb{C}^{(MN+1)\times(MN+1)}$ and $\bm\Gamma\in\mathbb{C}^{(MN+1)\times(MN+1)}$ into the following forms
\begin{eqnarray}\label{eq:Gmma}
	\bm{\Theta} \triangleq 
	\begin{pmatrix}
		\bm{\Theta}_0      &   \bm{\theta}_1  \\
		(\bm{\theta}_1)^H    &   \bar{\Theta}
	\end{pmatrix},
	\qquad \bm{\Gamma} \triangleq 
	\begin{pmatrix}
		\bm{\Gamma}_0      &   \bm\gamma  \\
		(\bm\gamma)^H    &   \bar{\Gamma}
	\end{pmatrix},
\end{eqnarray}
where $\bm{\Theta}_0\in\mathbb{C}^{MN \times MN}$ and $ \bm{\Gamma}_0\in\mathbb{C}^{ MN \times MN}$ are submatrices, $\bm{\theta}_1\in\mathbb{C}^{MN} $ and $\bm\gamma\in\mathbb{C}^{MN}$ are column vectors, $\bar{\Theta}$ and $\bar\Gamma $ are scalars.

At the first place, we focus on solving subproblems regarding to the variables $\bm g$, $\bm z$, $\epsilon$, and $\bm U$ because $\L_\rho(\cdot)$ is smooth if $\bm e$ and $\bm \Theta$ are fixed.  For simplicity, we abbreviate the augmented Lagrangian function $\mathcal{L}_{\rho}({\bm{e}},\bm{g},{\bm{z}}, \epsilon,\bm U,\bm{\Theta} ;\bm{\beta} , \bm{\Gamma})$ as $\L_\rho$, and omit the superscripts appeared in \eqref{geg} and \eqref{sGS}. 
It is trivial to deduce that the partial derivative of $\L_\rho(\cdot)$ with respect to variables $\bm g$, $\bm z$, $\epsilon$, and $\bm U$ takes the following explicit form
\begin{subequations}
\begin{align}
		\bigtriangledown_{\bm{g}}\mathcal {L}_{\rho}  &=   (1+\rho)\bm{g} + \bm\beta- \rho(\bm{r}-\bm{e}-\tilde{\bm{S}}\bm{z}),\label{gradg}
		\\
		\bigtriangledown_{\bm{z}}\mathcal{L}_{\rho}  &= \rho  (\tilde{\bm{S}})^H(\tilde{\bm{S}}\bm{z}+ \bm{g} -\bm{r}+\bm{e}+ \frac{\bm{\beta}}{\rho})  + 2\rho(\bm{z}-\bm{\theta}_1) - 2\bm\gamma,\label{gradz}
		\\
		\bigtriangledown_ \epsilon\mathcal{L}_{\rho}  &=  \frac{\lambda}{2} + \rho ( \epsilon-\bar{\Theta}) - \bar{\Gamma},\label{grade}
		\\
		\bigtriangledown_{u_{l(m)}}\mathcal{L}_{\rho}  &= 
		\begin{cases}
			\frac{\lambda}{2} + MN\rho u_0(0) - {\rm Tr}(\rho \bm{\Theta}_0 + \bm{\Gamma}_0  ), & \text{if} \ l=m=0,\\[2mm]
			(N-|l|)(M- |m|)\rho u_l(m) - \sum^{N-|l|}_{j=1}{\rm Tr}_m(\mathcal{S}_{l,j}(\rho \bm{\Theta}_0 + \bm{\Gamma}_0  )),                                                      &\text{if} \ l\neq 0 ~\text{or}~ m \neq 0.\label{gradu}
		\end{cases}
\end{align}
\end{subequations}
For any $\bm{P}\in \mathbb{C}^{MN \times MN}$, we use the operator $\mathcal{S}_{l,j}(\bm{P})$ to return the $(l,j)$--th $M\times M$ submatrix $\bm{P}_{l,j}$, where the position of matrix $\bm{P}_{l,j}$ is corresponding to the $j$--th block matrix \text{Toep}($\bm u_l$) in $\mathcal{T}(\bm U)$ (from left to right), and then use the operator ${\rm Tr}_m(\cdot)$ to output the sum of the elements locating in $m$-th sub-diagonal of an input matrix. We note that  \eqref{gradg}-\eqref{grade} are actually linear systems, which indicates that the iterates $(\bm g^{k+1},\bm z^{k+1}, \epsilon^{k+1})$ can be got easily. 
We now pay our attention to the linear system \eqref{gradu} to get the latest $\bm U^{k+1}$ in which its value is related to the position of the elements in $\bm U$. 
Let $\bm{I}_{(M,N)}$ be a $(2M-1)\times (2N-1)$ complex matrix  whose element is $1$ at $(M, N)$-th position and $0$ otherwise. Using this matrix, the solution to $\bigtriangledown_{\bm U}\mathcal{L}_{\rho} = 0$ can be expressed as the following unified form: 
\begin{equation*}
	\bm{U} = \mathcal{T}^*\Big(\bm{\Theta}_0 + \frac{\bm{\Gamma}_0}{\rho} \Big) - \frac{\lambda}{2MN\rho}\bm{I}_{(M,N)},
\end{equation*}
where $\mathcal{T}^*(P) = \bm Q :=  (\bm q_{-N+1},\bm q_{-N+2},\ldots,\bm q_{N-1})
$ with 
$$
\bm q_l=(q_l(-M+1),q_l(-M+2),\ldots,q_l(M-1))^{\top}
$$ 
for any $l= -N+1,-N+2,\dots,N-1$, and 
\begin{equation*}
	q_l(m) := \frac{1}{(N-|l|)(M- |m|)}  \sum^{N-|l|}_{j=1}{\rm Tr}_m(\mathcal{S}_{l,j}(P))
\end{equation*}
for any $m= -M+1, -M+2,\dots,M-1$. 

At the second place, we focus on solving the $\bm g$-subproblem and $\bm\Theta$-subproblem involved in \eqref{geg} and \eqref{sGS}, respectively.
Using Definition \ref{def2.2}, we see that the $\bm e$- and $\bm\Theta$-subproblems obey the forms of proximal point mapping, and thus,  admit analytic solutions, that is,
\begin{align*}
	\bm{e} =~&  {\text{Prox}}^{\mu/\rho}_{\|\cdot\|_1}(\bm{r} -\bm{g}   - \tilde{S}\bm{z}-{\bm\beta}/{\rho})\\
	=~&\text{sign}(\bm{r} -\bm{g}   - \tilde{S}\bm{z}-{\bm\beta}/{\rho})\odot \max\left\{|\bm{r} -\bm{g}   - \tilde{S}\bm{z}-{\bm\beta}/{\rho}|-{\mu}/{\rho} , 0\right\},
\end{align*}
and
$$
\bm\Theta =\Pi_{\mathcal{S}^{MN+1}_+}\left(
\begin{pmatrix}
	\mathcal T(\bm U) & \bm z \\
	\bm{z}^H  & \epsilon
\end{pmatrix}
- \frac{\bm\Gamma}{\rho} \right).
$$
Let $P\in\mathbb{C}^{(MN+1)\times (MN+1)}$ be a matrix with its eigenvalue decomposition such that
\begin{eqnarray*}
	P\triangleq
	\begin{pmatrix}
		\mathcal T(\bm U) & \bm z \\
		\bm{z}^H  & \epsilon
	\end{pmatrix}
	- \frac{\bm\Gamma}{\rho},\quad\text{and}\quad
		P\triangleq V\begin{pmatrix}
		\zeta_1 &~ &~ &~ \\
		~& \zeta_2&~&~\\
		~&~&\ddots&~\\
		~&~&~&  \zeta_{MN+1}
	\end{pmatrix}V^H,
\end{eqnarray*}
where $\zeta_i$ is an eigenvalue and $V$ is a matrix with its $i$-th column being the eigenvector correspondingly.
From optimization literature, it is easy to see that the projection of $P$ over $\S^{MN+1}_+$ admits a compact form, that is,
\begin{eqnarray*}
\Pi_{\mathcal{S}^{MN+1}_+}(P)= V\begin{pmatrix}
	\max\{0,\zeta_1\} &~ &~ &~ \\
	~& \max\{0,\zeta_2\}&~&~\\
	~&~&\ddots&~\\
	~&~&~&  \max\{0,\zeta_{MN+1}\}
\end{pmatrix}V^H.
\end{eqnarray*}

In light of the above analysis, we are ready to state the full steps of sGS based ADMM when it is employed to solve the problem  \eqref{eq:mod3}.

\begin{framed}	
\noindent{\bf Algorithm $1$: sGS-ADMM}
\vskip 1.0mm \hrule \vskip 1mm	
	\begin{itemize}			
	\item[Step 0.] Let $\rho > 0$ and $\varrho  \in (0,\frac{1+\sqrt{5}}{2})$ be given parameters. Choose an initial point $(\bm{e}^0,\bm{g}^0,\bm{z}^0, \epsilon^0, \bm {U}^0,\bm{\Theta}^0 ;\bm{\beta}^0,\bm{\Gamma}^0) $. For $k=0,1,2,\dots$,  generate the sequence $\{(\bm e^{k},\bm{g}^{k},\bm{z}^{k}, \epsilon^{k}, \bm U^{k},\bm\Theta^{k};\bm{\beta}^{k},\bm{\Gamma}^{k} )\}$ according to the following iterations until a certain stopping criterion is met.
		\item [Step 1.] Compute $(\bm e^{k+1},\bm g^{k+1})$ via the following steps:
		\begin{eqnarray*}
			\widetilde{\bm{g}}^k  &:= & \frac{\rho}{1+\rho}   (\bm{r}-\bm{e}^k-\widetilde{\bm{S}}\bm{z}^k)-\frac{1}{1+\rho}\bm{\beta}^k,
			\\
			\bm{e}^{k+1} &:= & {\text{Prox}}^{\mu/\rho}_{\|\cdot\|_1}(\bm{r} - \widetilde{\bm{g}}^k   - \tilde{S}\bm{z}^k-\frac{\bm{\beta}^k}{\rho}),
			\\
			\bm{g}^{k+1} &:= & \frac{\rho}{1+\rho}
			(\bm{r}-\bm{e}^{k+1}-\tilde{\bm{S}}\bm{z}^k)-\frac{1}{1+\rho}\bm{\beta}^k.
		\end{eqnarray*}
		\item [Step 2.] Compute $(\bm z^{k+1},\epsilon^{k+1},\bm U^{k+1},\bm\Theta^{k+1})$ via the following two steps:
		\begin{itemize}
			\item[Step 2.1] Compute the temporary points $(\bm\tilde z^{k},\tilde\epsilon^{k},\bm\tilde U^{k})$ via the following three steps:
		\begin{eqnarray*}
			\widetilde{\bm{z}}^k &:= & (\tilde{\bm S}^H\tilde{\bm S}+2\bm{I}_{MN})^{-1}[\tilde{\bm S}^H(\bm{r} - \bm{g}^{k+1}  - \bm{e}^{k+1}-\frac{\bm{\beta}^k}{\rho}) + 2\bm{\theta}^k_1 + \frac{2}{\rho}\bm\gamma^k],
			\\
			{\tilde{ \epsilon}}^k  &:= & \frac{1}{\rho} {\bar\Gamma}^k + {\bar \Theta}^k - \frac{\lambda}{2\rho},
			\\
			\widetilde{\bm{U}}^k &:= & \mathcal{T}^*(\bm{\Theta}^k_0 + \frac{\bm{\Gamma}^k_0}{\rho} )- \frac{\lambda}{2MN\rho}\bm{I}_{(M,N)}.	
		\end{eqnarray*}
		\item [Step 2.2.]	Compute $(\bm z^{k+1},\epsilon^{k+1},\bm U^{k+1},\bm\Theta^{k+1})$ via the following four steps:
		\begin{eqnarray*}
			\bm{\Theta}^{k+1} &:= & \Pi_{\mathcal{S}^{(MN+1)}_+}\left(  
			\begin{pmatrix}
				\mathcal T(\widetilde{\bm{U}}^k) & \widetilde{\bm{z}}^k \\
				(\widetilde{\bm{z}}^k)^H  & {\tilde\epsilon}^k 
			\end{pmatrix}
			- \frac{\bm{\Gamma}^k}{\rho} \right) ,\\
			\bm{U}^{k+1} & :=  & \mathcal{T}^*(\bm{\Theta}^{k+1}_0 + \frac{\bm{\Gamma}^k_0}{\rho}) - \frac{\lambda}{2MN\rho}\bm{I}_{(M,N)},	\\
			\epsilon^{k+1} &:= & \frac{1}{\rho} \bar\Gamma^k + \bar\Theta^{k+1} - \frac{\lambda}{2\rho},
			\\
			\bm{z}^{k+1}  & := & (\tilde{\bm S}^H\tilde{\bm S}+2\bm{I}_{MN})^{-1}[\tilde{\bm S}^H(\bm{r} - \bm{g}^{k+1}  - \bm{e}^{k+1}-\frac{\bm{\beta}^k}{\rho}) + 2\bm{\theta}^{k+1}_1 + \frac{2}{\rho}\bm\gamma^{k+1}].
		\end{eqnarray*}
			\end{itemize}
		\item [Step 3.] Update the multipliers $\bm \beta$ and $\bm\Gamma$ via
		\begin{eqnarray*}
			\bm{\beta}^{k+1} &:=  &\bm{\beta}^{k} +\varrho \rho (\bm{g}^{k+1} - \bm{r} + \bm{e}^{k+1} + \tilde{S}\bm{z}^{k+1}),
			\\
			\bm{\Gamma}^{k+1}&:=  &\bm{\Gamma}^{k} +\varrho \rho \left(  \bm{\Theta}^{k+1} -
			\begin{pmatrix}
				\mathcal T(\bm{U}^{k+1})  &  \bm{z}^{k+1} \\
				(\bm{z}^{k+1})^H    &   \epsilon^{k+1} 
			\end{pmatrix}   \right).
		\end{eqnarray*}
	\end{itemize}
\vspace{-.5cm}
\end{framed}

We see that Algorithm $1$ is easily implementable because the analytic solutions are permitted for each variable.
But there is an inverse operation involved in  $\tilde{\bm{z}}^{k}$ and $\bm{z}^{k+1}$ which may take significant time in the case of $MN$ are especially large. In our numerical experiments part, we use an iterative method to find an approximate solution so that the matrix inverse is avoided. At last, we must point out that the iterative process should be terminated when the Karush-Kuhn-Tucker (KKT) system associated with the current point is small enough. For more detail on this termination criteria, we can refer to the numerical experiments' part in Subsection \ref{kktpart}. 

\subsection{Convergence results}
In this part, we establish the convergence result
of Algorithm $1$ by means of the relationship with the semi-proximal ADMM  of Fazel et al. \cite{fazel2013hankel}. The follow lemma shows that the sGS used individually in the groups of $(\bm e,\bm g)$ and $(\bm z,\epsilon,\bm U,\bm\Theta)$ is equivalent to solving these variables together but with an additional semi-proximal term.

\begin{lemma}\label{lem31}
	For any $k \geq 0$, the $(\bm{e}, \bm g)$-subproblem in Step $1$ and $(\bm z, \epsilon, \bm U, \bm\Theta)$-subproblem in Step $2$ in Algorithm $1$ can be respectively expressed as the following compact form:
		\begin{align}
		({\bm{e}}^{k+1} ,\bm{g}^{k+1})=& \argmin\limits_{{\bm{e}},\bm{g}}	\mathcal{L}_{\rho}({\bm{e}},\bm{g},{\bm{z}}^k, \epsilon^k ,\bm{U}^k,\bm{\Theta}^k;\bm{\beta}^k , \bm{\Gamma}^k)+\frac12\|\bm Y-\bm Y^k \|_{\T_1}^2,\label{Y}\\
		(\bm{z}^{k+1} , \epsilon^{k+1},\bm{U}^{k+1},\bm{\Theta}^{k+1})  =&\argmin\limits_{{\bm{z}}, , \epsilon,\bm{U},\bm{\Theta}}	\mathcal{L}_{\rho}({\bm{e}}^{k+1},\bm{g}^{k+1},{\bm{z}}, \epsilon ,\bm{U},\bm{\Theta};\bm{\beta}^k , \bm{\Gamma}^k)+\frac12\|\bm W- \bm W^k \|_{\T_2}^2,\label{W}
	\end{align}
where $\T_1$ and $\T_2$ are self-adjoint semi-positive definite linear operators.
\end{lemma}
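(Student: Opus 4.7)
The plan is to invoke the symmetric Gauss--Seidel decomposition theorem of Li, Sun and Toh \cite{li2019block} once for each of the two groups. Both subproblems have the ``nonsmooth $+$ strictly convex quadratic'' structure that the theorem requires, so the work reduces to identifying the quadratic Hessian in each group, checking that its diagonal blocks are positive definite, and then lifting the resulting sGS proximal operator to the padded variables $\bm Y$ and $\bm W$.

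For \eqref{Y}, I would freeze $(\bm z,\epsilon,\bm U,\bm\Theta)$ inside $\mathcal{L}_\rho$. Up to constants, the remaining dependence on $(\bm g,\bm e)$ is $\mu\|\bm e\|_1$ plus the quadratic $\tfrac12\|\bm g\|_2^2+\tfrac{\rho}{2}\|\bm g+\bm e-c\|_2^2$, where $c$ depends only on the frozen variables and $\bm\beta$. Its Hessian in the ordering $(\bm g,\bm e)$ is the block matrix with diagonal blocks $(1+\rho)I$ and $\rho I$ and off-diagonal blocks $\rho I$, both diagonal blocks being positive definite. Splitting this Hessian as $\mathcal H_1=\mathcal U_1+\mathcal D_1+\mathcal U_1^*$ with $\mathcal D_1$ block-diagonal and $\mathcal U_1$ strictly upper block-triangular, I set $\T_1:=\mathcal U_1\mathcal D_1^{-1}\mathcal U_1^*$ and lift it to the $\bm Y$-space through the embedding $\bm e\mapsto\bm Y_1$, $\bm g\mapsto\bm Y_2$. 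Then $\T_1$ is self-adjoint and positive semi-definite by construction, and since the update order $\widetilde{\bm g}^k\to\bm e^{k+1}\to\bm g^{k+1}$ in Step~1 is precisely the forward--backward sGS sweep associated with $\mathcal H_1$, the sGS decomposition theorem yields \eqref{Y} immediately.

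For \eqref{W}, I would repeat the same construction on $(\bm z,\epsilon,\bm U,\bm\Theta)$ with $(\bm e,\bm g)$ frozen. The smooth quadratic is the sum of $\tfrac{\rho}{2}\|\bm g-\bm r+\bm e+\tilde S\bm z\|_2^2$ and $\tfrac{\rho}{2}\|\bm\Theta-\widehat M(\bm U,\bm z,\epsilon)\|_F^2$ (where $\widehat M$ denotes the matrix appearing on the right of the constraint in \eqref{eq:mod3}), plus the linear trace and multiplier terms; the only nonsmooth contribution is $\delta_{\mathcal S^{MN+1}_+}(\bm\Theta)$. From \eqref{gradz}--\eqref{gradu} I read off, in the ordering $(\bm z,\epsilon,\bm U,\bm\Theta)$, the diagonal blocks $\rho(\tilde S^H\tilde S+2I)$, $\rho$, $\rho\,\mathcal T^*\mathcal T$ and $\rho I$, while the off-diagonal blocks encode the bilinear coupling between $\bm\Theta$ and $(\bm z,\epsilon,\mathcal T(\bm U))$ inside $\widehat M$. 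Setting $\T_2:=\mathcal U_2\mathcal D_2^{-1}\mathcal U_2^*$ from the analogous triangular splitting and lifting it to the $\bm W$-space through the embedding prescribed by the definition of $\bm W$, the order $\bm z\to\epsilon\to\bm U\to\bm\Theta\to\bm U\to\epsilon\to\bm z$ in Step~2 coincides with the associated sGS cycle, and a second application of the sGS decomposition theorem produces \eqref{W}.

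The main obstacle is verifying that every diagonal block of $\mathcal H_2$ is strictly positive definite, which is the key hypothesis needed by the decomposition theorem. The only nontrivial case is the $\bm U$-block: I must show $\mathcal T^*\mathcal T\succ 0$ on $\mathbb C^{(2M-1)\times(2N-1)}$. Inspecting \eqref{eq:calT}, each scalar entry $u_l(m)$ of $\bm U$ controls a distinct diagonal of a distinct sub-block of $\mathcal T(\bm U)$ with multiplicity $(N-|l|)(M-|m|)>0$, so $\mathcal T^*\mathcal T$ is a positive diagonal operator with entries $(N-|l|)(M-|m|)$, confirming what is already visible in \eqref{gradu}. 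The remaining diagonal blocks are manifestly positive definite, so the sGS decomposition theorem of \cite{li2019block} applies to both groups and the two identities \eqref{Y}, \eqref{W} follow.
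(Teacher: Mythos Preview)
Your proposal is correct and follows essentially the same route as the paper: both arguments identify the quadratic Hessian of $\mathcal L_\rho$ in each group, perform the block triangular splitting, set $\mathcal T_i=\mathcal U_i\mathcal D_i^{-1}\mathcal U_i^*$, and invoke the sGS decomposition theorem of Li, Sun and Toh \cite{li2019block}. The only cosmetic differences are that the paper works in the padded $(MN{+}1)\times(MN{+}1)$ matrix embedding and groups $(\bm U,\epsilon)$ into a single block (three blocks instead of your four), whereas you work in the native variable spaces and explicitly verify the positive definiteness of the $\bm U$-diagonal block via $\mathcal T^*\mathcal T$; since $\bm z,\epsilon,\bm U$ are mutually decoupled these choices lead to the same $\mathcal T_2$ and the same conclusion.
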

\begin{proof}
	It is sufficient to note that the Hessian, denoted by $\bm Q_Y$,
   of  $\mathcal{L}_{\rho}(\bm{e},\bm{g},\bm{z}^k,\epsilon^k,\bm{U}^k,\bm{\Theta}^k;\bm{\beta}^k , \bm{\Gamma}^k)$ regarding to variables $(\bm e, \bm g)$ is 
	given by
\begin{eqnarray*}
	\bm Q_{\bm Y}:=\rho \begin{pmatrix}
		\bm I_{MN+1}   &  \bm I_{MN+1}\\
		\bm I_{MN+1}   &   \frac{\rho+1}{\rho} \bm I_{MN+1}
	\end{pmatrix} = \bm M_{\bm Y} + \bm D_{\bm Y} +\bm M^H_{\bm Y},
\end{eqnarray*}
where $\bm M_Y$ and $\bm D_Y$ be the strictly upper triangular and diagonal of $\bm Q_Y$, respectively, that is,
\begin{eqnarray*}
	\bm M_{\bm Y} := \begin{pmatrix}
		\bm 0_{MN+1}   &  \rho\bm I_{MN+1}\\
		\bm 0_{MN+1}   &    \bm 0_{MN+1}
	\end{pmatrix},  \qquad 
	\bm D_{\bm Y}:= \begin{pmatrix}
		\rho \bm I_{MN+1}   &  \bm 0_{MN+1}\\
		\bm 0_{MN+1}   &   (\rho+1) \bm I_{MN+1}
	\end{pmatrix}.
\end{eqnarray*}
Then, from the sGS decomposition theorem of Li et al. \cite{li2019block}, it is easy to see that the $\T_1$ obeys the following form:
\begin{eqnarray}\label{tt1}
	\bm {\T_1}:= \bm M_{\bm Y} \bm D^{-1}_{\bm Y}\bm M^H_{\bm Y}= \begin{pmatrix}
		\frac{\rho^2}{1+\rho} \bm I_{MN+1}   &  \bm 0_{MN+1}\\
		\bm 0_{MN+1}   &   \bm 0_{MN+1}
	\end{pmatrix}.
\end{eqnarray}
In a similar way, let $\bm Q_{\bm W}$ be the Hessian of  $\mathcal{L}_{\rho}(\bm{e}^j,\bm{g}^k,\bm{z}, \epsilon,\bm{U},\bm{\Theta};\bm{\beta}^k , \bm{\Gamma}^k)$ regarding to variables $(\bm{z}, \epsilon,\bm{U},\bm{\Theta})$, and then partition it into
\begin{eqnarray*}
	\bm Q_{\bm W}:=\rho\begin{pmatrix}
		\bm I_{MN+1}   &  -\bm I_{MN+1}    & -\bm I_{MN+1}\\
		- \bm I_{MN+1}   &  \bm B    & \bm I_{MN+1}\\
		- \bm I_{MN+1}   &   \bm I_{MN+1}    & \bm I_{MN+1}
	\end{pmatrix} = \bm M_{\bm W} + \bm D_{\bm W} +\bm M^H_{\bm W},
\end{eqnarray*}	
where 
\begin{eqnarray*}
	    \bm M_{\bm W}:=\rho\begin{pmatrix}
		\bm 0_{MN+1}   &  -\bm I_{MN+1}    & -\bm I_{MN+1}\\
		\bm 0_{MN+1}   &   \bm 0_{MN+1}    & \bm I_{MN+1}\\
		\bm 0_{MN+1}   &   \bm 0_{MN+1}    & \bm 0_{MN+1}
	\end{pmatrix} , \quad 
 	\bm D_{\bm W}:=\rho\begin{pmatrix}
 	\bm E_{MN+1}   &  \bm 0_{MN+1}    & \bm 0_{MN+1}\\
 	\bm 0_{MN+1}   &  \bm B    & \bm 0_{MN+1}\\
 	\bm 0_{MN+1}   &   \bm 0_{MN+1}    & \bm I_{MN+1}
 \end{pmatrix},
\end{eqnarray*}
with
\begin{eqnarray*}
	 \bm B := \begin{pmatrix}
	 	\tilde{\bm S}^H\tilde{\bm S} +\bm I_{MN} & \bm 0\\
	 	\bm 0^\top & 1
	 \end{pmatrix}.
\end{eqnarray*}
Also,  from the sGS decomposition theorem of Li et al. \cite{li2019block}, it yields that
\begin{eqnarray}\label{tt2}
	\bm{\T_2} := \rho\begin{pmatrix}
		\bm B^{-1} + \bm I_{MN+1}  &  -\bm I_{MN+1}    & \bm 0_{MN+1}\\
		- \bm I_{MN+1}           &   \bm I_{MN+1}    & \bm 0_{MN+1}\\
		\bm 0_{MN+1}             &   \bm 0_{MN+1}    & \bm 0_{MN+1}
	\end{pmatrix} .
\end{eqnarray}	
Therefore, the desired result of this lemma is proved.
\end{proof}

From this lemma, we know that the iteration scheme  \eqref{geg} is equivalent to \eqref{Y} if $\T_1$ is given in \eqref{tt1}, and \eqref{sGS} is equivalent to \eqref{W} if $\T_2$ is given in \eqref{tt2}. This equivalence is essential because it reduces the minimizing task of $\mathcal{L}_{\rho}({\bm{e}},\bm{g},{\bm{z}}, \epsilon,\bm U,\bm{\Theta} ;\bm{\beta}^k , \bm{\Gamma}^k)$ to two parts of $(\bm e,\bm g)$ and $(\bm z,\epsilon,\bm U,\bm\Theta)$. 
As a result, the convergence of Algorithm $1$ can be easily followed by the known
convergence result in optimization. To conclude this part, we state the convergence of Algorithm $1$ listed below. For its proof, one may refer to  \cite[TheoremB.1]{fazel2013hankel}.
\begin{theorem}\label{the31}
	Let the sequence $\{(\bm e^{k},\bm{g}^{k},\bm{z}^{k}, \epsilon^{k}, \bm U^{k},\bm\Theta^{k};\bm{\beta}^{k},\bm{\Gamma}^{k} )\}$ be generated by Algorithm $1$ for any initial point $(\bm e^{0},\bm{g}^{0},\bm{z}^{0}, \epsilon^{0}, \bm U^{0},\bm\Theta^{0};\bm{\beta}^{0},\bm{\Gamma}^{0} )$  with
	$\varrho \in(0, \frac{1+\sqrt{5}}{2})$, then it converges to the accumulation
	point $(\bar{\bm e},\bar{\bm{g}},\bar{\bm{z}}, \bar \epsilon, \bar{\bm U},\bar{\bm\Theta}; \bar{\bm\beta}, \bar{\bm{\Gamma}} )$ such that $(\bar{\bm e},\bar{\bm{g}},\bar{\bm{z}}, \bar \epsilon, \bar{\bm U},\bar{\bm\Theta} )$ is the solution of the problem \eqref{eq:mod3}, and that $(\bar{\bm\beta}, \bar{\bm{\Gamma}} )$ is an optimal solution of the dual problem  \eqref{eq:dual}.
\end{theorem}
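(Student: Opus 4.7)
The plan is to reduce Algorithm $1$ to a standard two-block semi-proximal ADMM, for which Fazel et al.~\cite[Theorem B.1]{fazel2013hankel} already supplies the desired convergence statement. The bridge is precisely Lemma \ref{lem31}: after that lemma the inner sGS passes over the groups $(\bm e,\bm g)$ and $(\bm z,\epsilon,\bm U,\bm\Theta)$ collapse into two ordinary proximal subproblems augmented by the semi-proximal terms $\tfrac12\|\bm Y-\bm Y^k\|_{\T_1}^2$ and $\tfrac12\|\bm W-\bm W^k\|_{\T_2}^2$, so Steps 1--3 of Algorithm $1$ are nothing but the semi-proximal ADMM applied to the two-block reformulation \eqref{eq:blockmod} of \eqref{eq:mod3}.

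With this identification in hand I would, in order, verify the standing hypotheses of \cite[Theorem B.1]{fazel2013hankel}: (i) $\T_1$ and $\T_2$ are self-adjoint and positive semidefinite, which follows from Lemma \ref{lem31} together with the sGS decomposition theorem of~\cite{li2019block} since $\bm D_{\bm Y}$ and $\bm D_{\bm W}$ are block-diagonal with positive definite blocks (the only non-routine piece being $\tilde{\bm S}^H\tilde{\bm S}+\bm I_{MN}\succ 0$, which is immediate from $\tilde{\bm S}$ being diagonal); (ii) the objective of \eqref{eq:blockmod} is a sum of proper closed convex functions (an $\ell_1$-norm, an indicator of the PSD cone, and convex quadratic/linear pieces); (iii) the equality coupling constraint is linear; and (iv) the KKT system for the primal-dual pair \eqref{eq:mod3}--\eqref{eq:dual} is solvable, which follows from strong duality of the SDP \eqref{eq:mod1} via a Slater-type feasibility witnessed by, e.g., $\bm U=\kappa\bm I_{(M,N)}$, $\bm z=0$, $\epsilon=1$ for $\kappa$ large enough that the block matrix in \eqref{eq:mod1} is strictly positive definite.

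Once these items are in place, applying \cite[Theorem B.1]{fazel2013hankel} with step length $\varrho\in(0,(1+\sqrt 5)/2)$ yields convergence of $\{(\bm Y^k,\bm W^k;\bm\beta^k,\bm\Gamma^k)\}$ to a KKT pair of \eqref{eq:blockmod}, and unpacking $\bm Y^k$ and $\bm W^k$ back into $(\bm e^k,\bm g^k,\bm z^k,\epsilon^k,\bm U^k,\bm\Theta^k)$ delivers Theorem \ref{the31} exactly as stated. The main obstacle, beyond this essentially bookkeeping work, is to ensure that the sGS machinery of~\cite{li2019block}, originally stated in real Hilbert spaces, transfers cleanly to the present complex Hermitian setting with inner product $\langle\cdot,\cdot\rangle_{\mathbb R}$; this should be routine through the standard identification of $\mathbb{C}^n$ with $\mathbb{R}^{2n}$ endowed with the real inner product, but it is the one step where an explicit remark is needed rather than a direct appeal to a theorem.
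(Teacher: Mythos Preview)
Your proposal is correct and follows exactly the route the paper takes: use Lemma~\ref{lem31} to identify Algorithm~1 with a two-block semi-proximal ADMM on \eqref{eq:blockmod}, and then invoke \cite[Theorem~B.1]{fazel2013hankel}. The paper in fact offers no proof beyond this citation, so your explicit checks of the hypotheses (positive semidefiniteness of $\T_1,\T_2$, convexity, Slater feasibility, and the real--complex identification) merely spell out what the paper leaves implicit.
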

	
At the end of this section, we are devoted to returning  the delay $\bm \psi$ and the  Doppler frequency $\bm \phi$ when an optimal solution $(\bar{\bm e},\bar{\bm{g}},\bar{\bm{z}}, \bar \epsilon, \bar{\bm U},\bar{\bm\Theta} )$ of the problem \eqref{eq:mod3} is achieved.  
From \eqref{eq:R}, we know that $\bm \psi$ and  $\bm \phi$ are closely related to  $\bm z$. However, as shown in (\ref{eq:z}) that, the relationship between them seems more  complicated so that it is not a trivial task to derive them only from $\bm z$.
Alternatively, we adopt the aforementioned MUSIC which is very popular in frequency estimation because of its better resolution's quality and higher accuracy  \cite{naha2014determining}.
More precisely, when  $\bar{\bm z}$ is obtained by Algorithm $1$, we utilize the orthogonality of signal subspace and noise subspace to construct a spatial spectral function, and then, we estimate the signal parameters by locating the poles in the spectrum.  
It should be noted that the  parameters estimated by MUSIC include the information of all paths. If we want to retain the  parameters related to targets, we need to perform a filtering operation according to the delay of different paths. Typically, the delay of clutter is small and the delay of direct path is zero. More details are omitted here, because they are beyond the scopes of this paper.

\section{Numerical Experiments}\label{sec4}

In this section, we conduct some numerical experiments to illustrate the accuracy and efficiency of Algorithm $1$ (named sGS-ADMM) while it is used to estimate  moving targets according to solving the SDP problem \eqref{eq:mod2}.  In particular, we mainly focus on evaluating the numerical performance of Algorithm $1$, and particularly do comparisons to the directly-extended ADMM  of Zheng et al. \cite{zheng2017super}.
Because we take less notice of the tuning parameters $\lambda$ and $\mu$ in the model \eqref{eq:mod2} while are used for different data, here, we fix their values based on a scalar $\sigma$, that is,
\begin{displaymath}
	\lambda := \sigma \sqrt{MN \log(MN)},	~~\text{and}~~\mu := \frac{\lambda}{\sqrt{MN}},
\end{displaymath}
where $\sigma>0$ is a scalar given in advance. 

\subsection{Visual experiments using a simple simulated data}

In the first test, we consider the scenario with multiple targets and clutters, and particularly set the number of targets to $5$ and the number of clutters to $100$. Their velocity and range of these targets are shown in Fig. \ref{fig:target}, where the red ``$\diamond$" represents the targets,  the black ``$\times$" represents the clutters, and the blue ``$\triangle$" represents the direct path.  
In this test, the targets and clutters are assumed to be point scatters which generated randomly in the range of $[0, 30$]km. Besides, the velocities of targets are  within $[-148,148]m/s$ and the velocities of clutters are limited into $[-3, 3]m/s$. 
We assume that there is only one illuminator to transmit OFDM signal with a carrier frequency of $2$ GHZ. 
The length of OFDM symbol is $T=200 \mu s$ and the length of the cyclic prefix is $T_{cp}=100 \mu s$.
We also assume that the number of subcarriers is  $50$, and the signal is divided into $N=16$ blocks for processing. 
In this test, all the algorithms are compiled with Matlab R2016a and run on a LENOVO laptop with one Intel Core Silver 4216 CPU Processor ($2.10$ to $3.20$ GHz) and $32$ GB RAM.
  \begin{figure}
	\centering
	\includegraphics[width=3.5in]{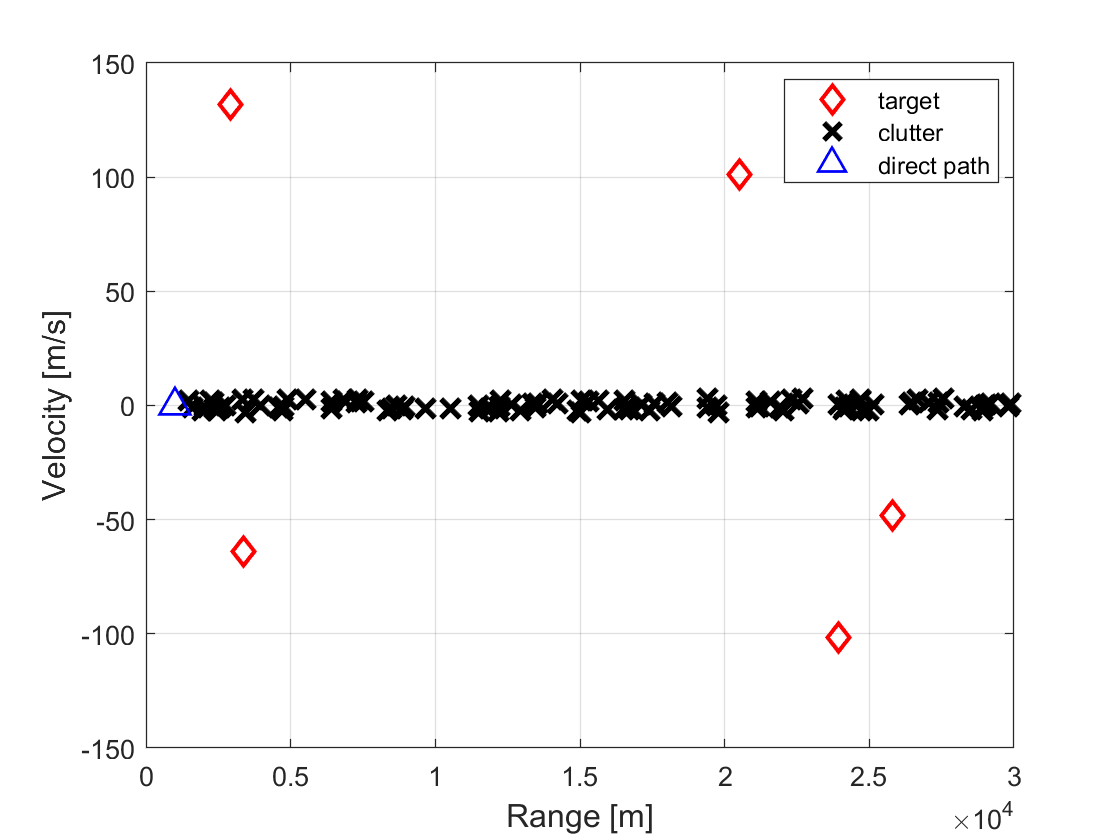}
	\caption{Simulation scenarios with 100 clutters and 5 targets}
	\label{fig:target}
\end{figure}

Taking the demodulation error into account, we employ the quadrature phase-shift keying (QPSK)  to modulate, and use the bit-error-rates (BER) to measure demodulation error. 
In this test, we use sGS-ADMM and directly-extended ADMM, and then compare their performance for different BER from $0$ to $0.1$ with $0.02$ intervals. 
For each algorithm, the iterative process forcefully terminates when the number of iterations reaches $100$, $200$, and $300$, to observe the algorithms' performance within a fixed number of iterations. 
To get a distinct visual comparison, we plot the spectrum for estimation results derived from each algorithm in Figure \ref{fig:ber}. 
It can be clearly seen from this figure that there is no significant difference between sGS-ADMM and ADMM in the spectrum when BER is small. 
But, with the increases of BER, the estimations' qualities produced by ADMM become lower and lower.
At the case of BER$=0.1$, we see from the 2nd-column of Figure \ref{fig:ber} that, many ghost peaks appeared in the spectrum, which may cause many difficulties to identify targets that lead to  increasing the false alarms. 
In contrast, the spectrum displayed at the 1st-column is clearly accurate and ``clean",  which shows that sGS-ADMM is able to detect these targets and does not degrade with the increases of BER. 

 \begin{figure}
	\centering
	\subfloat[][]{\includegraphics[width=3in]{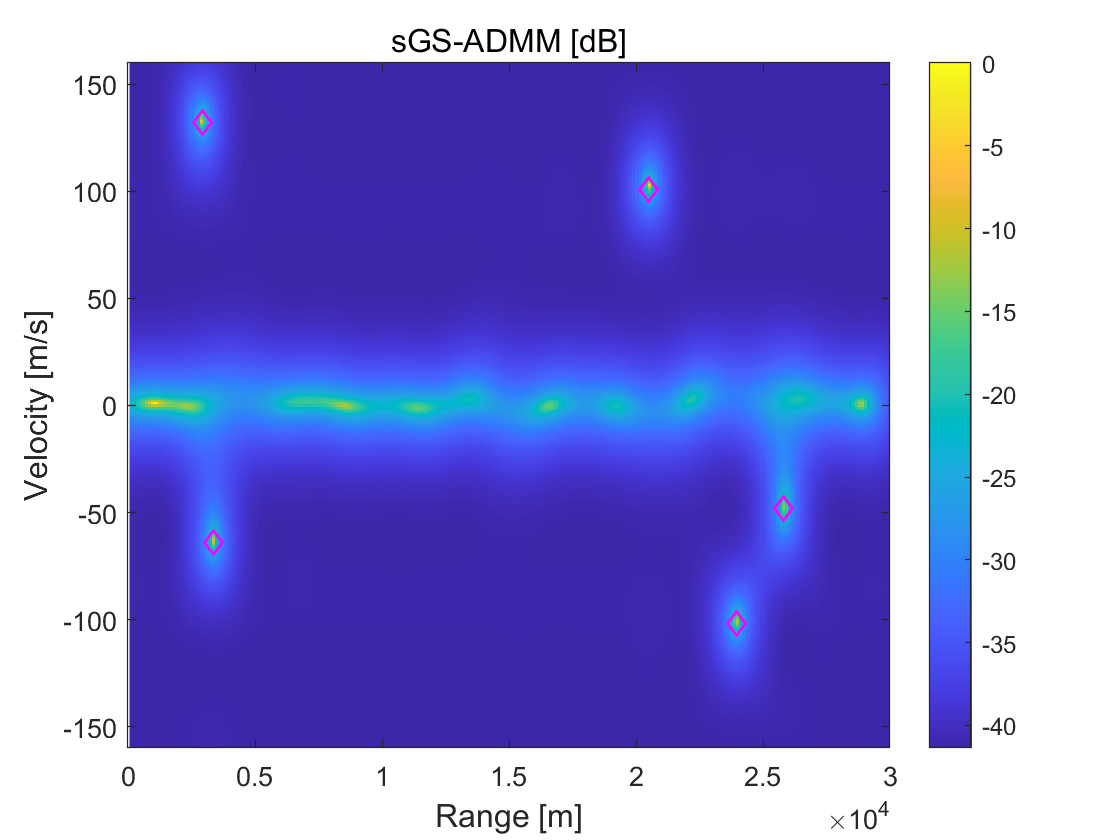}}
	\subfloat[][]{\includegraphics[width=3in]{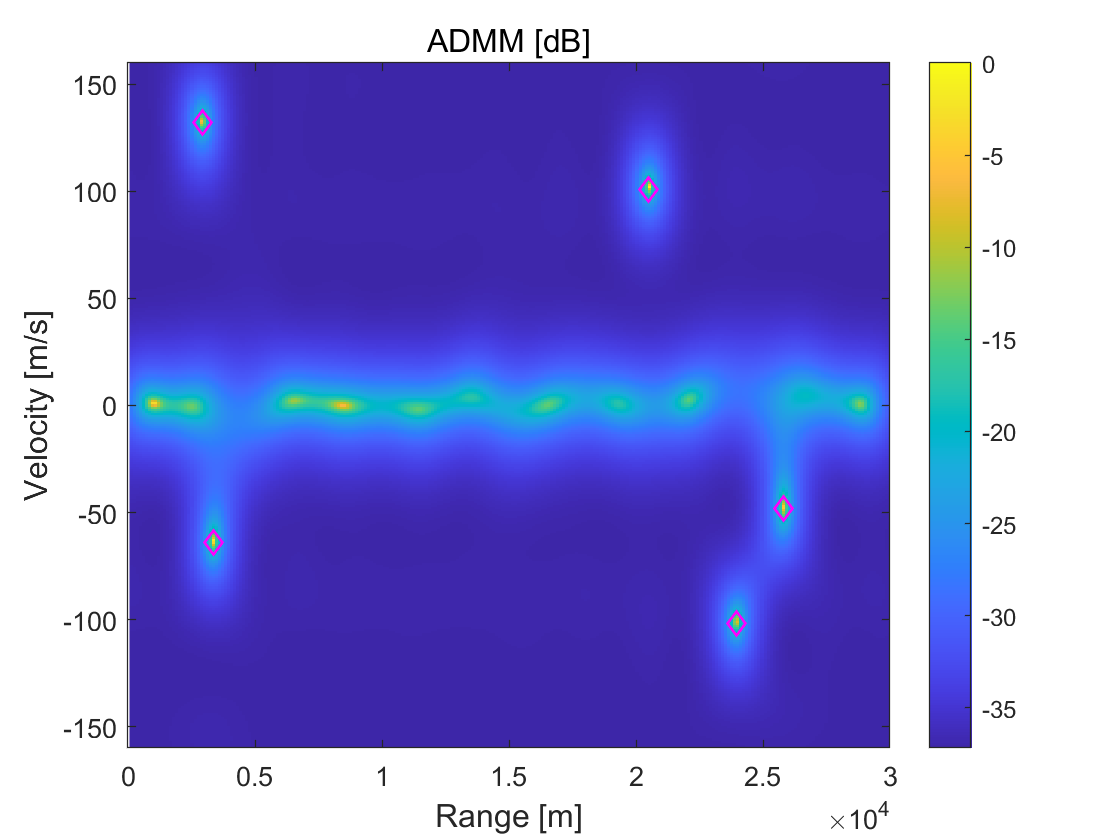}}
	\newline
	\subfloat[][]{\includegraphics[width=3in]{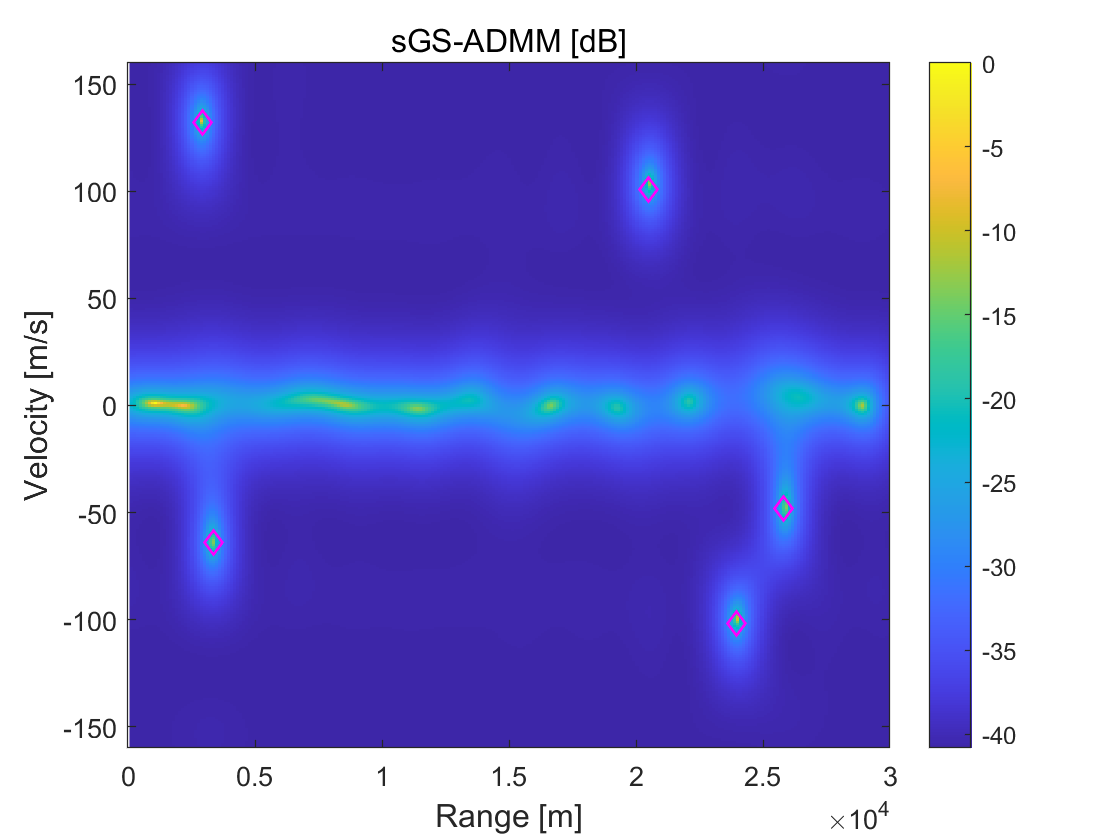}}
	\subfloat[][]{\includegraphics[width=3in]{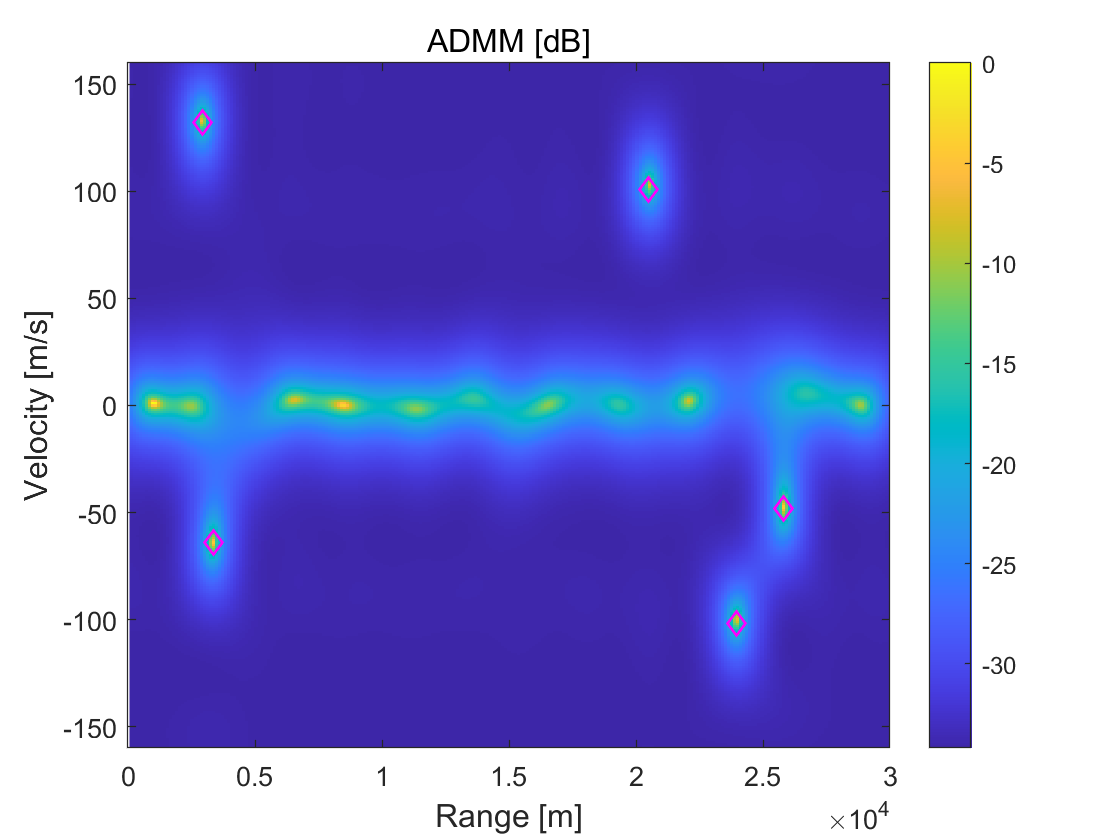}}
	\newline
	\subfloat[][]{\includegraphics[width=3in]{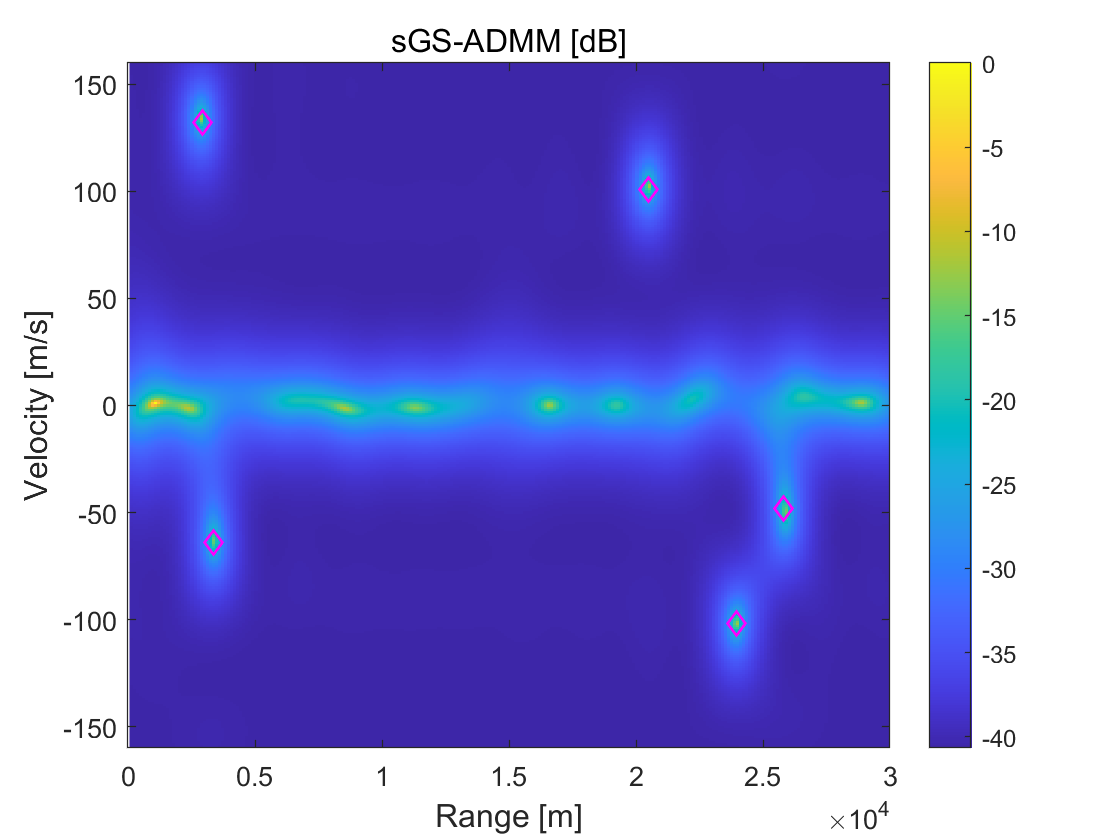}}
	\subfloat[][]{\includegraphics[width=3in]{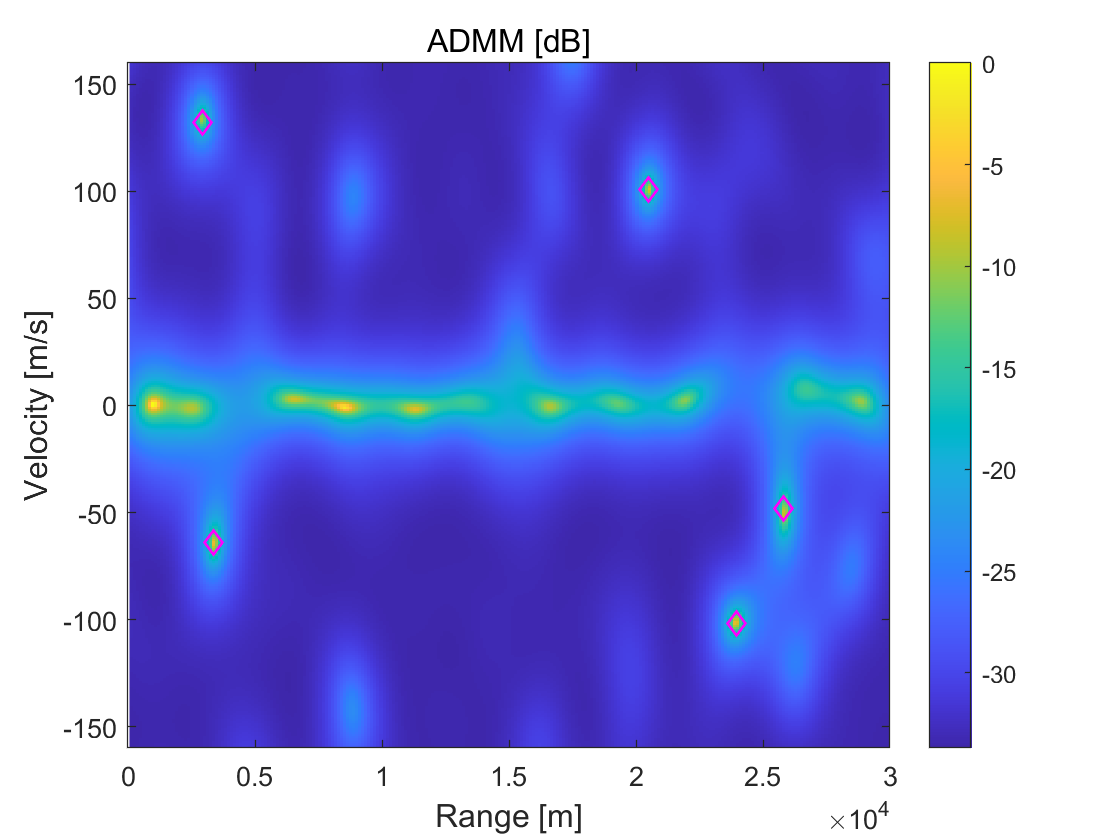}}
	
	\caption{Comparisons of range and velocity accuracy with BER=$0$ (top), BER=$0.06$ (middle), and BER=$0.1$ (bottom).}
	\label{fig:ber}
\end{figure}

\subsection{Numerical performance comparisons}\label{kktpart}
To evaluate the numerical efficiencies of each algorithm,  we adopt the relative KKT residual to measure the estimation qualities associated with the problem \eqref{eq:mod3}. More precisely, the KKT residual, denoted by $\eta_{\max}$, is defined as 
$$
\eta_{\max}:=\max \{\eta_1,\eta_2,\eta_3,\eta_4,\eta_5,\eta_6
 \},
$$
where
$$ 
 \begin{array}{lll}
 	&\eta_1 := \dfrac{\|\bm z-\bm r + \bm e -2\bm\gamma\|_2}{1+\|\bm z\|_2+2\|\bm\gamma\|_2},
 	&\eta_2 := \dfrac{\|\bm e- {\text{Prox}}^{\mu}_{\|\cdot\|_1}( \bm e -\bm{z})\|_2}{1+\|\bm e\|_2 + \|\bm r-\bm e-\bm z\|_2},\\[4mm]
 	&\eta_3 := \dfrac{\|\bm{\Theta}- \Pi_{\mathcal{S}^{(MN+1)}_+}( \bm\Theta -\bm\Gamma)\|_F}{1+\|\bm\Theta\|_F + \|\bm\Gamma\|_F},
 	&\eta_4 :=\dfrac{\|\frac{\lambda}{2}-\bar{\Gamma}\|_2}{1+\|\epsilon\|_2 +\|\bar{\Gamma}\|_2},\\[4mm]
 	&\eta_5 := \dfrac{\|\mathcal{T}^*(\bm{\Gamma_0})-\frac{\lambda}{2MN}\bm I_{(M,N)}\|_F}{1+\|\bm U\|_F+\|\mathcal{T}^*(\bm{\Gamma_0})\|_F},
 	&\eta_6 
 	:= \|\bm{\Theta} -	\begin{pmatrix}
 		\mathcal T(\bm{U}) & \bm{z} \\
 		\bm{z}^H  & \epsilon
 	\end{pmatrix}\|_F.
 \end{array}
$$
From optimization theory, we know that $\eta_{\max}=0$ if only if $(\bar{\bm e},\bar{\bm{g}},\bar{\bm{z}}, \bar \epsilon, \bar{\bm U},\bar{\bm\Theta} )$ is the solution of the problem \eqref{eq:mod3}. Hence, it is reasonable to stop the iterative process of both algorithms when $\eta_{\max}$ is smaller than a given  tolerance.

\begin{table}[h]
	\centering
	\caption{Comparison results of sGS-ADMM and ADMM with fixed steps of $100$, $200$ and $300$}
	\label{tab:maxIter123}
	\setlength{\tabcolsep}{3pt}
		{\small
	\begin{tabular}{ c| c c c  c c  }
		\hline
		\multirow{2}{*} ~step &BER&
		Time(s) &$\eta_{\max}$  &obj & n\_{tar} 
		\\ 
		&&  (sGSA$|$dA) &  (sGSA$|$dA) & (sGSA$|$dA) & (sGSA$|$dA)\\
		\hline
             &0& 53.84 $|$34.35 &  3.29e-04$|$1.29e-03  & 4.77e+03 $|$ 7.22e+04  &5$|$5  \\
	100&  0.02 &  56.32 $|$36.81 &  3.20e-04$|$2.44e-03  & 5.25e+03 $|$ 7.83e+04  &5$|$5  \\
	      &0.04& 54.89 $|$36.46 &  3.49e-04$|$2.02e-03  & 6.00e+03 $|$ 8.32e+04  &5$|$5  \\
          &0.06& 54.66$|$36.45&  3.68e-04$|$2.32e-03  & 5.80e+03 $|$ 8.25e+04  &5$|$5  \\
	      &0.08& 55.07 $|$ 36.67 &  3.89e-04$|$2.53e-03  & 6.98e+03 $|$ 8.99e+04  &6$|$5  \\
	       &0.1& 55.07 $|$ 36.70 &  4.17e-04$|$2.62e-03  & 7.71e+03 $|$ 8.45e+04  &5$|$6  \\
		\hline 	
		     &0& 108.20 $|$69.24 &  7.73e-05$|$3.48e-04  & 3.42e+03 $|$ 7.28e+04  &5$|$5  \\
	200&  0.02 & 112.17 $|$83.68 &  8.07e-05$|$3.94e-04  & 3.90e+03 $|$ 7.90e+04  & 5$|$5  \\
		  &0.04& 109.79 $|$72.38 &  8.45e-05$|$4.03e-04  & 4.60e+03 $|$ 8.46e+04  &5$|$5  \\
		  &0.06& 110.10 $|$73.76 &  9.40e-05$|$4.13e-04  & 4.42e+03 $|$ 8.37e+04  &5$|$5  \\
	      &0.08& 110.25 $|$ 73.45 &  9.00e-05$|$4.19e-04  & 5.60e+03 $|$ 9.07e+04  &5$|$5  \\
	       &0.1& 110.46 $|$74.26 &  9.01e-05$|$4.95e-04  & 6.21e+03 $|$ 8.20e+04  &5$|$10  \\
		\hline 	
		    &0& 162.20 $|$103.47 &  3.68e-05$|$ 1.92e-04  & 2.89e+03 $|$ 7.30e+04  &5$|$5  \\
	300&  0.02& 163.94 $|$106.42 &  4.05e-05$|$2.05e-04  & 3.35e+03 $|$ 7.93e+04  &5$|$5  \\
	     &0.04&  164.67 $|$ 108.17 &  4.52e-05$|$2.20e-04  & 4.03e+03 $|$ 8.52e+04  &5$|$5  \\
         &0.06& 165.34 $|$ 110.52 &  4.62e-05$|$2.09e-04  & 3.86e+03 $|$ 8.41e+04  &5$|$5  \\
	     &0.08& 165.40 $|$110.46 &  4.32e-05$|$2.13e-04  & 5.01e+03 $|$ 9.03e+04  &5$|$5  \\
	      &0.1& 166.29 $|$112.09 &  4.59e-05$|$2.54e-04  & 5.60e+03 $|$ 7.97e+04  &5$|$20  \\
		\hline 	
	\end{tabular}
}
\end{table}

To observe the convergence behavior of each algorithm numerically, we report the final objective values (obj), the number of identified targets (n\_{tar}), the computing time required (Time), and the final KKT residuals ($\eta_{\max}$) in the case of BER=$\{0,0.02,0.04,0.06,0.08,0.1\}$. In this test, we only record these values of `n\_star', `Time', `$\eta_{\max}$', and `obj' when the step (step) reaches $100$, $200$, and $300$. These results are given in Table \ref{tab:maxIter123} where `sGSA' and `dA' are short for `sGS-ADMM' and `ADMM', respectively. From the last column, we see that both algorithms could successfully detect the number of targets, and from the last 2nd to 3rd  columns, we see that sGS-ADMM outperforms ADMM in the sense that it derives higher quality solutions with lower final objective function values and smaller KKT residuals. But, we also see that sGS-ADMM always requires more computing time at each test case. I think that this phenomenon is not surprising because sGS-ADMM computes some variables twice per-iteration.  
To visibly see the performance of each algorithm, we plot the KKT residuals' behavior versus the iteration  in Figure \ref{fig:eta}.
We see from this figure that, the KKT residuals generated by sGS-ADMM are always at the bottom, which indicates that sGS-ADMM decreases faster than ADMM. 
In conclusion, this test shows that both the objective-function values and the KKT residuals  derived by sGS-ADMM decrease faster than that obtained by ADMM as iteration goes on.
\begin{figure}[h]
	\centering
	\includegraphics[width=3in]{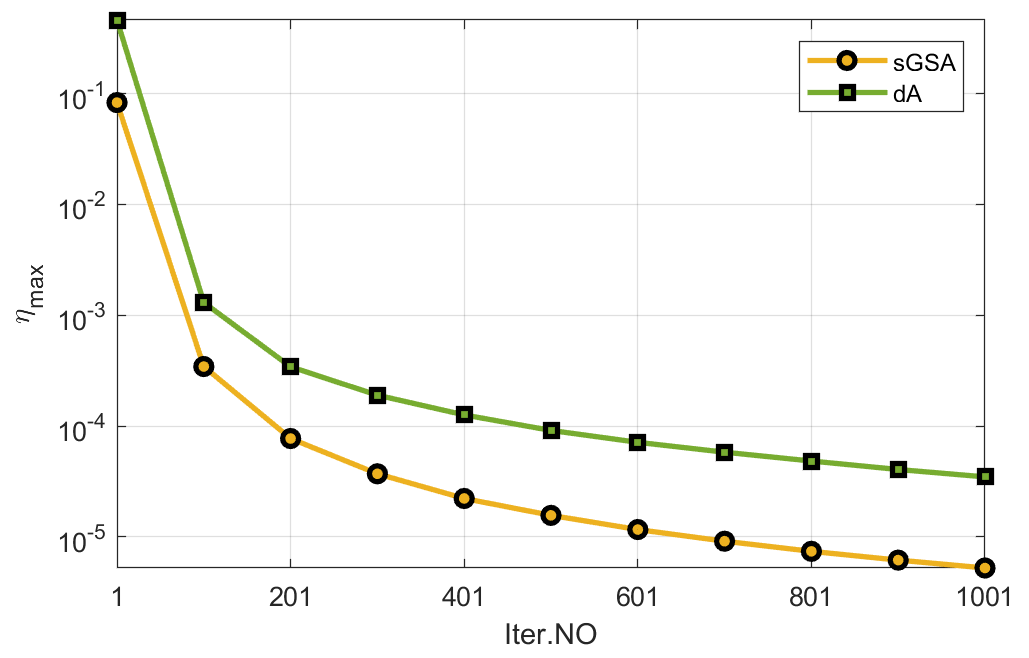}
	\caption{The KKT residual with iteration number increasing}
	\label{fig:eta}
\end{figure}

To compare the numerical performance in a relatively fair way, we run both algorithms from the same point and terminate when their KKT residuals are smaller than a given tolerance, i.e.,
$$
\eta_{\max}\leq \text{Tol}.
$$
In this test, we choose the tolerance as Tol=$1e-3$, $1e-4$, and $1e-5$, respectively.
Besides, we also force the iterative process to terminate when the number of iterations exceeds  $1000$ without achieving convergence. In this case, we say that the corresponding algorithm is failed. 
The detailed results derived by sGS-ADMM and ADMM with different BER values are reported in Table \ref{tab:kkt}, in which the symbol `---'  represents that the  algorithm fails to solve the corresponding problems.
\begin{table}
	\centering
	\caption{Comparison results by sGS-ADMM and ADMM }
	\label{tab:kkt}
	{\small
		\setlength{\tabcolsep}{3pt}
		\begin{tabular}{ c| c c c c c c}
			\hline
			\multirow{2}{*}{BER} &	tol & Time(s) &$\eta_{\max}$  &obj &Iter.NO &$n_{tar}$
			\\ 
			&(sGSA$|$dA)	&  (sGSA$|$dA) &  (sGSA$|$dA) & (sGSA$|$dA)& (sGSA$|$dA)& (sGSA$|$dA)\\
			\toprule[1pt]
			& 1.0e-03 & 15.44$|$40.61   &9.88e-04$|$9.67e-04  & 9.16e+03$|$7.23e+04  & 29$|$114& 5$|$5   \\
			0    & 1.0e-04 & 95.32$|$163.41  &9.95e-05$|$9.97e-05  & 3.65e+03$|$7.30e+04  & 173$|$467 & 5$|$5 \\
			& 1.0e-05 & 361.44$|$342.50 &9.99e-06$|$3.64e-05  & 2.15e+03$|$7.32e+04  & 663$|$--- & 5$|$5 \\
			\hline 
			& 1.0e-03 & 14.93$|$44.19   &9.78e-04$|$9.85e-04  & 9.46e+04$|$7.80e+04  & 28$|$126 & 5$|$5  \\
			0.02  & 1.0e-04 & 95.46$|$173.06  &9.94e-05$|$9.98e-05  & 4.01e+03$|$7.92e+04  & 178$|$483 & 5$|$5  \\
			& 1.0e-05 & 368.34$|$349.95 &9.98e-06$|$3.50e-05  & 2.53e+03$|$7.95e+04  & 673$|$--- & 5$|$5  \\
			\hline
			& 1.0e-03 & 14.89$|$46.54   &9.84e-04$|$9.81e-04  & 9.90e+03$|$8.23e+04  & 27$|$130 & 5$|$5 \\
			0.04  & 1.0e-04 & 99.69$|$175.70  &9.99e-05$|$9.98e-05  & 4.41e+03$|$8.41e+04  & 185$|$500 & 5$|$5\\
			& 1.0e-05 & 379.58$|$351.64 &9.98e-06$|$3.65e-05  & 3.01e+03$|$8.45e+04  & 694$|$--- & 5$|$5 \\
			\hline
			& 1.0e-03 &  11.23$|$48.49  &9.81e-04$|$9.84e-04  & 1.10e+04$|$8.40e+04  & 21$|$32 & 2$|$5   \\
			0.06  & 1.0e-04 & 106.24$|$174.42 &9.93e-05$|$9.99e-05  & 4.90e+03$|$8.52e+04  & 191$|$479 & 5$|$5 \\
			& 1.0e-05 & 397.01$|$362.69 &9.99e-06$|$3.77e-05  & 3.45e+03$|$8.60e+04  & 719$|$---  & 5$|$5 \\
			\hline
			& 1.0e-03 & 11.28$|$49.32   &9.75e-04$|$9.80e-04  & 1.12e+04$|$8.73e+04  & 21$|$135 & 1$|$5   \\
			0.08  & 1.0e-04 & 109.69$|$193.14 &9.95e-05$|$9.97e-05  & 5.37e+03$|$8.59e+04  & 201$|$525 & 5$|$6 \\
			& 1.0e-05 & 432.70$|$368.50 &9.97e-06$|$3.81e-05  & 3.69e+03$|$8.52e+04  & 798$|$--- & 5$|$5 \\
			\hline
			& 1.0e-03 & 9.04$|$51.60    &9.71e-04$|$9.87e-04  & 1.22e+04$|$9.06e+04  & 17$|$141 & 1$|$15   \\
			0.1   & 1.0e-04 & 107.59$|$222.75 &9.93e-05$|$9.98e-05  & 6.04e+03$|$8.51e+04  & 196$|$602 & 5$|$22 \\
			& 1.0e-05 & 422.72$|$368.05 &9.98e-06$|$5.52e-05  & 3.98e+03$|$8.28e+04  & 776$|$--- & 5$|$30\\
			\toprule[1pt]
		\end{tabular}
	}
\end{table}

We see from  Table \ref{tab:kkt} that,  sGS-ADMM is capable of solving all the problems in each tested case.
However, ADMM still fails in the Tol=$1e-5$ case although it is successful in the low- and mid-accuracy cases. 
In addition, we also notice that in the case that both algorithms succeeded, sGS-ADMM is still two times faster. 
The better performance of sGS-ADMM  is consistent with the theoretical analysis aforementioned that the sGS has the ability to ensure   convergence. While turning our attention to the final objective  function values and the final KKT residuals, we find that these values derived by sGS-ADMM are always smaller than that by ADMM, which once again indicates that sGS-ADMM is a winner. 
At last, we see from the last column of this table that the number of targets identified by ADMM is not correct, especially in the case of BER$=0.1$,  because the true number of moving targets is only $5$.  
All in all, this test demonstrates that sGS-ADMM outputs a higher recognition rate and  a lower false alarm rate. It should be noted that, since the targets are estimated by means of grid search but  do not  fall onto the discrete grids,  the error related to the spacing of grids may exist invariably.

\section{Conclusions and remarks}

In recent literature, it was known that estimating the
joint delay-Doppler of moving targets in OFDM passive radar can be described as an atomic-norm regularized convex minimization problem, or a SDP minimization problem.
To solve this SDP problem, the directly-extended ADMM  is not necessarily convergent although it performed well experimentally. 
To address this issue, in this paper, we skillfully partitioned the variables $({\bm{e}},\bm{g},{\bm{z}}, \epsilon,\bm U,\bm{\Theta})$ into two groups, i.e., $(\bm e, \bm g)$ and $(\bm z,   \epsilon,\bm U,\bm\Theta)$, and then used sGS at each group.
One may think that this iteration form may lead to more computational costs thus leading to the algorithm being much more inefficient. 
But fortunately, the convergence result listed in Theorem \ref{the31} claimed that this algorithm should be more robust and efficient because the convergence property indeed reduced the number of iterations greatly.
This claim was verified experimentally, that is, the sGS-ADMM worked successfully in most tested cases, derived higher-quality solutions, and run at least two times faster than ADMM.

There are some interesting topics that deserve further investigating. Firstly, the superior performance of sGS-ADMM is confirmed using some simulated data. But, its practical behaviors on real data sets need more testing. 
Secondly, we see that all the subproblems in sGS-ADMM are solved exactly. Hence, some strategies to solve these subproblems inexactly but has the ability to ensure convergence are also essential.
Thirdly, we note that our approach is not able to locate the interesting targets accurately according to the velocity and range. Hence, the task of joint estimation of the velocity, range, and direction needs further investigation.
At last, it is also an interesting topic to develop other efficient algorithms, such as algorithms for dual problem, to improve the performance of sGS-ADMM.

\section*{Acknowledges}
The work of Y. Xiao is supported by the National Natural Science Foundation of China (Grants No. 11971149 and 12271217).

\section*{Conflict of interest}
No potential conflict of interest was reported by the authors.

\bibliographystyle{elsarticle-harv}
\bibliography{ref}

\end{document}